\documentclass[11pt]{article}

\usepackage{amsthm,amsfonts,amssymb,amsmath}
\usepackage{enumerate}
\usepackage{fullpage}

\usepackage[colorlinks=true]{hyperref}
\newtheorem{thm}{Theorem}[section]

\newtheorem{theorem}[thm]{Theorem}

\newtheorem{lemma}[thm]{Lemma}

\theoremstyle{definition}

\newtheorem{example}[thm]{Example}

\theoremstyle{remark}

\newcommand{\BC}{{\mathbb {C}}}

\newcommand{\BF}{{\mathbb {F}}}

\newcommand{\BR}{{\mathbb {R}}}

\newcommand{\CC}{\mathbb{C}}
\newcommand{\RR}{\mathbb{R}}
\newcommand{\ZZ}{\mathbb{Z}}
\newcommand{\QQ}{\mathbb{Q}}

\newcommand{\PP}{\mathbb{P}}

\newcommand{\CA}{{\mathcal {A}}}

 \newcommand{\CCC}{{\mathcal {C}}}
\newcommand{\CD}{{\mathcal {D}}}

\newcommand{\CH}{{\mathcal {H}}}

\newcommand{\CL}{{\mathcal {L}}}
\newcommand{\CM}{{\mathcal {M}}}

\newcommand{\CO}{{\mathcal {O}}}

\newcommand{\CX}{{\mathcal {X}}}
\newcommand{\CY}{{\mathcal {Y}}}

\newcommand{\an}{{\mathrm{an}}}

\newcommand{\Aut}{{\mathrm{Aut}}}

\renewcommand{\div}{{\mathrm{div}}}
\newcommand{\wdiv}{{\mathrm{wdiv}}}

\newcommand{\rank}{{\mathrm{rank}}}

\newcommand{\Pic}{\mathrm{Pic}}

\newcommand{\tor}{\textnormal{tor}}

\newcommand{\vol}{{\mathrm{vol}}}

\newcommand{\wh}{\widehat}

\newcommand{\pair}[1]{\langle {#1} \rangle}

\newcommand{\ds}{\displaystyle}

\newcommand{\ol }{\overline}

\newcommand{\lra}{\longrightarrow}

\newcommand{\CLL}{{\overline{\mathcal L}}}

\newcommand{\OL}{{\overline{L}}}
\newcommand{\OM}{{\overline{M}}}
\newcommand{\OD}{{\overline{D}}}

\newcommand{\OK}{{\overline{K}}}

\newcommand{\lb}{\mathcal{L}}              
\newcommand{\mb}{\mathcal{M}}              
\newcommand{\lbb}{\overline{\mathcal{L}}}  

\newcommand{\hlb}{hermitian line bundle}

\newcommand{\Fal}{\textnormal{Fal}}

\newcommand{\VCV}{\Vert\cdot\Vert}

\DeclareMathOperator{\Adeg}{\widehat{\textnormal{deg}}}

\DeclareMathOperator{\Spec}{\textnormal{Spec}}

\begin{document}

\title{Quantitativity in the Mordell Conjecture}
\author{Xinyi Yuan}

\maketitle






\begin{abstract} 
The Mordell conjecture asserts that there are only finitely many rational points on a smooth projective curve of genus at least two over a number field. 
The uniform Mordell problem asks for suitable upper bounds on the number of rational points in the Mordell conjecture. In this survey, we will introduce various recent developments on this uniformity problem.  
\end{abstract}

\tableofcontents

\section{Introduction}

Classical Diophantine problems aim to find rational solutions of polynomial equations with rational coefficients. In modern terminology, the goal of Diophantine geometry is to study rational points of algebraic varieties over number fields. 
The celebrated Mordell conjecture, proved by Faltings in 1983, is the following statement. 

\begin{theorem}[Faltings]
Let $C$ be a  curve of genus at least two over a number field $K$. 
Then $C(K)$ is finite. 
\end{theorem}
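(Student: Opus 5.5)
I would follow the Vojta--Bombieri route rather than Faltings' original argument through the Shafarevich and Tate conjectures, since it is the most direct approach and the one that best suits quantitative refinements. Assume $C(K)\neq\emptyset$ and fix $P_0\in C(K)$. The Abel--Jacobi map $j:C\to J$, $P\mapsto [P-P_0]$, embeds $C$ into its Jacobian $J$. By the Mordell--Weil theorem, $J(K)$ is a finitely generated abelian group of some rank $r$. Let $\hat h$ be the Néron--Tate height attached to the symmetric theta divisor. Then $\langle\cdot,\cdot\rangle$ makes $J(K)\otimes\RR\cong\RR^r$ a Euclidean space, and $J(K)$ maps onto a lattice in it with finite kernel, the torsion.

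The rational points are counted in two regimes. For \emph{small points}, the set of $x\in C(K)$ with $\hat h(j(x))\le c_1 h(C)$ (or below any fixed bound) is finite by Northcott, because $J(K)$ has only finitely many points of bounded height. For \emph{large points}, I would cover $\RR^r$ by finitely many cones of small angle, say of cosine at least $3/4$; the number of cones is at most $7^r$ or so. It then suffices to show that each cone contains only finitely many large points. Mumford's gap principle does part of this: if $x,y\in C(K)$ lie in the same cone and $\hat h(y)\ge\hat h(x)$ is large, then $\hat h(y)\ge c\,\hat h(x)$ for a constant $c>1$. This comes from the height identity on $C\times C$, which compares $\hat h(x-y)$ with $g\,\hat h(x)+g\,\hat h(y)-2\langle x,y\rangle$ up to $O(\hat h(x)+\hat h(y))^{1/2}$ terms. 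Consequently the heights of large points in a cone grow at least geometrically.

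The main step, and the main obstacle, is Vojta's inequality. There are constants $c_2,c_3$ such that if $x,y\in C(K)$ satisfy $\hat h(x)\ge c_2$, $\hat h(y)\ge c_3\,\hat h(x)$, and $\langle x,y\rangle\ge\frac34\sqrt{\hat h(x)\hat h(y)}$, we reach a contradiction. Combined with the gap principle, this bounds the number of large points in each cone by $O(\log c_3/\log c)$. To prove Vojta's inequality I would use Bombieri's simplification. On $C\times C$, consider the line bundle
\[
L=(d_1-g)\,p_1^*\theta+(d_2-g)\,p_2^*\theta+\Delta',
\]
built from the diagonal, with $d_2/d_1\approx\hat h(x)/\hat h(y)$ suitably scaled and with multiplicities chosen so that $L$ is ample for the right parameters. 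By Siegel's lemma, applied after Riemann--Roch on $C\times C$, one obtains a small global section $s$ of a multiple of $L$. An arithmetic Dyson lemma (or Roth's lemma) shows that $s$ has small index at $(x,y)$. Upper-bounding the height of a suitable nonvanishing derivative of $s$ at $(x,y)$ via Siegel's bound, and lower-bounding it via the height of $L$ evaluated at $(x,y)$, yields the inequality. The hardest bookkeeping is here: the derivative estimates, the index bound, and the choice of parameters so that the leading term $(d_1d_2-g\ldots)$ has the right sign when $\cos\ge 3/4$ and $g\ge2$. I expect this to be the main difficulty, and I would follow Bombieri's explicit choices of $d_1,d_2,\delta$.

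Finally, I would combine the two regimes: finitely many small points, plus finitely many large points in each of finitely many cones. This gives the finiteness of $C(K)$, and in fact a bound of the shape $c^{r}$ on the large points.
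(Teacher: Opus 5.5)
Your outline is correct, and its global structure is the one the paper relies on. The paper does not reprove Faltings' theorem; it only remarks that Vojta's bound on points of large height, together with the Northcott property, suffices. Its \S\ref{sec large} then carries out exactly your count: cover $J(K)_\RR$ by finitely many narrow cones, use Mumford's gap principle to get geometric growth of $|x|$ inside a cone, and use Vojta's inequality to cap the ratio.

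Where you differ is in the proof of Vojta's inequality itself. You follow Bombieri: an Abel--Jacobi map based at a rational point $P_0$, Weil heights, Riemann--Roch on $C\times C$ plus Siegel's lemma for the small section, and Roth's lemma for the index. The paper follows Yuan's simplification of Vojta's Arakelov-theoretic proof. It normalizes by $\alpha$ with $(2g-2)\alpha\cong\omega_{C/K}$ and works with Zhang's admissible adelic line bundles on $C^2$, so that
\[
\ol L\cdot(x,y)=\frac1g\delta_1|x|^2+\frac1g\delta_2|y|^2-2\langle x,y\rangle+\Big(\frac{\delta_1+\delta_2}{2g}-1\Big)\bar\alpha^2
\]
holds exactly. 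The small section of $m\ol L$ then comes from the arithmetic Siu inequality and an adelic Minkowski theorem, and the index is controlled by Dyson's lemma.

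Bombieri's route is more elementary, since it needs no Arakelov geometry. The price is $O(|x|+|y|+1)$ error terms and many auxiliary estimates whose constants are hard to track. The paper's route makes all error terms explicit in $g$ and $\bar\omega_{C/K,a}^2$, which is what Theorem \ref{quant Mordell} needs. For bare finiteness either route suffices. In both, what is used about the Vojta bundle is bigness, $L^2=2\delta_1\delta_2-2g>0$, not ampleness, so you need not arrange the latter.

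One slip to fix: the identity behind the gap principle is misstated. The relevant quantity is the height with respect to the diagonal,
\[
h_\Delta(x,y)=\frac1g\hat h(x)+\frac1g\hat h(y)-2\langle x,y\rangle+O\big((\hat h(x)+\hat h(y))^{1/2}+1\big).
\]
Equivalently, $\hat h(x-y)$ is compared with $(1-\frac1g)(\hat h(x)+\hat h(y))$. The gap comes from $h_\Delta(x,y)\ge -O(1)$ for $x\neq y$, because $\Delta$ is effective and $(x,y)\notin\Delta$. The expression you wrote, $g\hat h(x)+g\hat h(y)-2\langle x,y\rangle$, is at least $(g-1)(\hat h(x)+\hat h(y))\ge 0$ by Cauchy--Schwarz, so a lower bound for it gives nothing. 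With the correct coefficients, $\langle x,y\rangle\ge\frac34\sqrt{\hat h(x)\hat h(y)}$ and $g\ge 2$ force $\sqrt{\hat h(y)/\hat h(x)}\ge\frac{3+\sqrt5}{2}-o(1)$ as $\hat h(y)\ge\hat h(x)\to\infty$. That is the constant $c>1$ you need.
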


As a convention in this paper, a \emph{curve} always means a smooth and geometrically connected projective variety of dimension 1 over its base field. 

The conjecture was raised by Mordell \cite{Mor22} when he proved the finite generation of the group of the rational points of elliptic curves over $\QQ$ in 1922. Faltings' proof of the conjecture in \cite{Fal83} signified a milestone in the history of Diophantine geometry. 
By an analogue to the Thue--Siegel--Roth theorem in Diophantine approximation, Vojta \cite{Voj91} gave a new proof of the Mordell conjecture in 1991. 
Inspired by Faltings' proof, Lawrence--Venkatesh \cite{LV20}
gave a third proof of the Mordell conjecture in terms of $p$-adic Hodge theory.
We will come back to Vojta's proof later, since it will be used in our main results. 

Once we know that the set $C(K)$ is finite, a natural question is to give a suitable upper bound on the order of this set in terms of natural geometric and arithmetic invariants of $C$ over $K$. 
In this direction, we have the following uniform version of the Mordell conjecture, which solves a problem proposed by Mazur \cite[p. 234]{Maz86}.

\begin{theorem}[Vojta \cite{Voj91}, Dimitrov--Gao--Habegger \cite{DGH21}, K\"uhne \cite{Kuh}]
\label{uniform Mordell}
Let $g>1$ be an integer. 
Then there are positive constants $c_1(g)$ and $c_2(g)$ depending only on $g$ such that for any curve $C$ of genus $g$ over a number field
$K$, we have
$$
 |C(K)| \leq c_1(g) c_2(g)^{r}.
$$ 
Here $r=\rank\, J(K)$ is the Mordell--Weil rank of the Jacobian variety $J$ of $C$ over $K$.
\end{theorem}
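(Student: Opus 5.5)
We follow the now-standard strategy: split $C(K)$ into \emph{large} points and \emph{small} points with respect to the Néron--Tate height on $J$, and bound each part separately with constants depending only on $g$.

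\textbf{Setup.} Assume $C(K)\neq\emptyset$, and fix a point $x_0\in C(K)$. Embed $C$ into $J$ by the Abel--Jacobi map $x\mapsto x-x_0$. Let $\hat h$ be the Néron--Tate height on $J(\bar K)$ attached to the symmetric theta divisor. Put $h=\max\{1,h_{\Fal}(J)\}$, where $h_{\Fal}$ is the stable Faltings height. The goal is to bound, independently of $C$, the number of points of $C(K)$ in two ranges:
\[
\hat h(x-x_0)> c\,h \qquad\text{and}\qquad \hat h(x-x_0)\le c\,h,
\]
with $c=c(g)$.

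\textbf{Large points.} We use the quantitative form of Vojta's method, as made explicit by Bombieri, Faltings, Rémond and others, combined with Mumford's gap principle. Mumford's inequality states that for distinct $x,y\in C(\bar K)$ we have
\[
\hat h(x-y)\ge \tfrac{1}{g}\big(\hat h(x)+\hat h(y)\big)-O(h).
\]
Hence large points pointing in nearby directions of $J(K)\otimes\RR\cong\RR^r$ must have rapidly growing heights. Vojta's inequality, in Rémond's explicit form with all constants depending only on $g$ once heights exceed $c\,h$, bounds the number of large points in any cone of small angle by $c'(g)$. Covering $\RR^r$ by $c''(g)^r$ such cones (sphere packing) gives at most $c_1'(g)c_2'(g)^r$ large points. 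This step is essentially classical.

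\textbf{Small points.} Here we use the new input of Dimitrov--Gao--Habegger and Kühne: a uniform Bogomolov-type statement. There exist $\epsilon=\epsilon(g)>0$ and $N=N(g)$ such that, for $C\subset J$ of genus $g$,
\[
\#\{x\in C(\bar K):\ \hat h(x-x_0)\le \epsilon\max\{1,h\}\}\le N.
\]
DGH prove this first when $h$ is large, via the height inequality on the Faltings--Zhang family $C^{m}\to J^{m-1}$, $(x_i)\mapsto(x_i-x_{i+1})$, using the non-degeneracy of this family over $\mathcal M_g$ (a result of Gao's about the Betti map). Kühne proves an equidistribution theorem in families that handles the bounded-height case. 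Given this, the points of $C(K)$ with $\hat h(x-x_0)\le c\,h$ lie in a ball of radius $\sqrt{ch}$ in the lattice $J(K)/\mathrm{tors}\otimes\RR$. We cover this ball by $(1+2\sqrt{c/\epsilon})^r$ balls of radius $\sqrt{\epsilon h}/2$. Recentering at a point of $C(K)$ inside each small ball, that is, replacing $x_0$ by it (the bound is uniform in the base point), shows that each small ball contains at most $N$ points. So there are at most $N\,c_3(g)^r$ small points.

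\textbf{Main obstacle.} The real difficulty is the uniform Bogomolov bound in the small-points step, specifically that $\epsilon$ and $N$ are uniform in the moduli of $C$, including near the boundary where $h$ is small. I would treat it as the imported result of DGH and Kühne, and only verify that the recentering and ball-covering argument goes through with the constants as stated.
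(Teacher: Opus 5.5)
Your decomposition is the same as the paper's: Vojta--Mumford plus a cone covering for the large points, and the uniform Bogomolov bound of Dimitrov--Gao--Habegger and K\"uhne plus a covering argument for the rest. Your small-point step, including the recentering, is fine.

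The gap is in the large-point step, and it comes from the choice of origin. You measure all heights from an arbitrary $x_0\in C(K)$ and assert that Mumford's and Vojta's inequalities hold above a threshold $c(g)h$ that does not depend on $x_0$. In fact their constants depend on the embedding $C\hookrightarrow J$. In explicit versions they involve the height of the translated curve $C-x_0$, which grows with $\hat h(x_0-\alpha)$, where $(2g-2)\alpha\sim\omega_C$.

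The Mumford inequality you quote is false as stated. Let $C$ be hyperelliptic with two distinct $K$-rational Weierstrass points $w\neq w'$. Then $(2g-2)w\sim\omega_C$, so $w-\alpha$ and $w-w'$ are torsion, and $\hat h(w-x_0)=\hat h(w'-x_0)=\hat h(x_0-\alpha)$. Your inequality with $x=w$, $y=w'$ then reads $0\geq \frac{2}{g}\hat h(x_0-\alpha)-O(h)$. Fix such a $C$ over $\QQ$, let $x_0$ run through quadratic points of growing height, and take $K=\QQ(x_0)$. Then $h$ stays fixed while $\hat h(x_0-\alpha)\to\infty$, so the inequality fails.

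More generally, once $\hat h(x_0-\alpha)\gg h$, every point of $C(K)$ near $\alpha$ becomes, in your frame, a large point inside one very thin cone. Nothing in Vojta--Mumford bounds how many such points there are by $c'(g)$. Raising the threshold to about $\hat h(x_0-\alpha)$ does not rescue the argument, because the small-point ball then has radius depending on $x_0$ and your covering count is no longer uniform.

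The fix is the paper's normalization: measure everything from $\alpha$, i.e.\ use $i_\alpha$ and $|x|^2=\hat h(x-\alpha)_K$.
\begin{itemize}
\item For $x\in C(K)$ we have $(2g-2)(x-\alpha)=(2g-2)x-\omega_C\in J(K)$, so $C(K)-\alpha$ still lies in $V=J(K)_\RR$, which has dimension $r$.
\item In this frame the Mumford and Vojta thresholds depend only on $g$ and $h$. Theorem \ref{vojta} has threshold proportional to $\sqrt{\bar\omega^2_{C/K,a}}$, which is comparable to $\max\{1,h_\Fal(C)\}$ by Theorem \ref{fiberwise11}.
\item The complementary set $\{x:\hat h(x-\alpha)\le c\,h\}$ is a ball about the origin of $V$. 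Your recentering argument then applies verbatim, since the DGH--K\"uhne bound is uniform over base points on $C$.
\end{itemize}
A minor, harmless point: covering a ball of radius $\sqrt{ch}$ by balls of radius $\sqrt{\epsilon h}/2$ can require up to $(1+4\sqrt{c/\epsilon})^r$ balls, not $(1+2\sqrt{c/\epsilon})^r$.
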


The theorem is proved by a combination of the following bounds. 
\begin{enumerate}
\item (Large points)
Vojta's proof of the Mordell conjecture actually gives a deep inequality concerning distribution of rational points of large heights, which particularly implies an upper bound on the number of rational points of large heights. Note that this is sufficient for the Mordell conjecture by the Northcott property of heights.
The upper bound was refined by de Diego \cite{dD97} and R\'emond \cite{Rem00a}.
\item (Small points)
The works \cite{DGH21, Kuh} prove a uniform Bogomolov conjecture, which gives an upper bound on the number of rational points of small heights. 
By a simple argument of sphere packing, the uniform Bogomolov conjecture also bounds the number of all rational points complementary to (1). 
\end{enumerate}

There are two other approaches of the uniform Bogomolov conjecture.
In \cite{Yua21}, the author gave a different proof of the uniform Bogomolov conjecture in terms of the theory of adelic line bundles on quasi-projective varieties of Yuan--Zhang \cite{YZ}, which works over global fields;
In \cite{LSW}, Looper--Silverman--Wilms  proved the uniform Bogomolov conjecture over function fields with explicit constants. 
The work \cite{LSW} does not prove the case of number fields due to many obstacles  at archimedean places.  

We will explain all the above results in more details. For the moment, let us remark that Vojta's proof of the Mordell conjecture was extended by Faltings \cite{Fal91, Fal94} to prove the Bombieri--Lang conjecture for subvarieties of abelian varieties.
In this direction, a generalization of Theorem \ref{uniform Mordell} to algebraic families of subvarieties of abelian varieties was proved by  
Gao--Ge--K\"uhne \cite{GGK21}.

Let us return to  Theorem \ref{uniform Mordell}. 
A natural problem is to express the constants $c_1(g)$ and $c_2(g)$ explicitly in terms of $g$. Over function fields of characteristic 0, this problem was solved by Jiawei Yu in  \cite{Yu23} with $c_1(g)=16g^2 + 32g + 184$ and $c_2(g)=20g$ by applying the quantitative Bogomolov conjecture of \cite{LSW}. 
Over number fields, 
we have the following theorem by Jiawei Yu, Shengxuan Zhou, and the author. 

\begin{theorem}[Yu--Yuan--Zhou \cite{YYZ}]
\label{quant Mordell}
For any curve $C$ of genus $g>1$ over a number field $K$, we have
$$
 |C(K)| \leq 10^{13} g^8 \big(1+\frac{3\log g}{g}\big)^{r}.
$$ 
Here $r=\rank\, J(K)$ is the Mordell--Weil rank of the Jacobian variety $J$ of $C$ over $K$.
\end{theorem}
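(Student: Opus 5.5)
We embed $C$ into its Jacobian $J$ via an Abel--Jacobi map $j_\xi(x)=x-\xi$, with $\xi$ a degree-one divisor class chosen so that $(2g-2)\xi$ is the canonical class; this is possible after a finite extension, and passing to such an extension only increases $|C(K)|$ and the rank can be controlled. We work in the Euclidean space $V=J(\bar K)\otimes\RR$ restricted to $J(K)\otimes \RR\cong\RR^r$, with norm $|\cdot|=\hat h_\Theta^{1/2}$ given by the symmetric Néron--Tate height. We split $C(K)$ into small points $\{x: \hat h(j_\xi(x))\le c\,\max(1,h_{\Fal}(C))\}$ and large points, and count each piece separately.

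\textbf{Large points.} We would use an explicit form of Vojta's inequality, following the refinements of de Diego and Rémond, with all constants tracked in terms of $g$. The statement needed is this: if $x,y\in C(K)$ satisfy $|y|\ge c_0 |x|$ and $|x|^2\ge c_3\max(1,h_{\Fal}(C))$, then $\langle x,y\rangle\le \frac{3}{4}|x||y|$ (or a similar angle bound), where $c_0,c_3$ are explicit and polynomial in $g$. A Mumford-type gap principle gives the complementary estimate: within a single dyadic-type shell $|x|\in[R,c_0R]$, any two points make an angle at least about $\arccos(1-1/g)$. Covering $\RR^r$ by cones of small angular radius and counting shells then bounds the number of large points by $c(g)\,(1+\frac{3\log g}{g})^r$. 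The exponential base close to $1$ arises because Mumford's gap gives an angle of order $\sqrt{\log g/g}$-type separation in the direction of the point, and the corresponding spherical cap count yields the base $1+O(\log g/g)$. Vojta's angle bound cuts off the number of shells at $O(\log c_0)$.

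\textbf{Small points.} We use an explicit uniform Bogomolov statement: there are explicit $c_4(g),c_5(g)$, polynomial in $g$, such that $\#\{x\in C(\bar K): \hat h(j_\xi(x))\le c_4 \max(1,h_{\Fal})\}\le c_5$. The plan is to take the adelic line bundle approach of \cite{Yua21}, or a Zhang admissible pairing approach, and make it quantitative. Zhang's formula expresses $\hat h(\omega_a)$ through the $\varphi$-invariant, and a Wilms-type lower bound $\varphi\ge$ an explicit multiple of $h_{\Fal}$-type quantities gives a lower bound for the essential minimum. Combining this with an equidistribution or height-difference argument on $C\times C$ (the Zhang–Ullmo trick via the map $(x,y)\mapsto x-y$) bounds the number of small points by $O(g^{c})$. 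This is then passed to a ball-packing argument: the points in the ball of radius $\sqrt{c_3 h}$ are covered by balls of radius $\sqrt{c_4 h}/2$, whose number is $(1+2\sqrt{c_3/c_4})^r$. To keep the base $1+3\log g/g$, the small and large thresholds must be matched, so that the intermediate region is handled by Mumford's gap rather than by packing.

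\textbf{Main obstacle.} The main obstacle is the explicit small-points bound over number fields, specifically controlling archimedean contributions in a lower bound for $\hat h$ of small points uniformly in $g$ with polynomial constants. I would first try Wilms' results on the $\varphi$- and $\delta$-invariants (explicit lower bounds at archimedean places) combined with a quantitative version of the admissible-pairing proof. The second delicate point is matching the Vojta threshold $c_3$ with the Bogomolov threshold $c_4$ up to $g$-polynomial factors, so that the middle range contributes only $g^{O(1)}$ shells, each costing $(1+3\log g/g)^r$. Collecting the constants should yield $10^{13}g^8$.
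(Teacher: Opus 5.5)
The mechanism you give for the base $1+\frac{3\log g}{g}$ is wrong, and this is a genuine gap.

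\textbf{Where the base comes from.} Mumford's gap principle cannot produce it. From $\frac1g|x|^2+\frac1g|y|^2-2\pair{x,y}\geq -E$ one only gets
$\cos\angle(x,y)\leq \frac{\lambda+\lambda^{-1}}{2g}+\frac{E}{2|x||y|}$ with $\lambda=|x|/|y|$. This is vacuous once $\lambda$ exceeds about $2g$. So it says nothing across a shell $[R,c_0R]$ when $c_0$ is a higher power of $g$. Moreover, a separation angle of size $\arccos(1-1/g)\approx\sqrt{2/g}$, or $\sqrt{\log g/g}$, would only bound the number of directions by roughly $(c\sqrt g)^r$.

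The base is dictated by the \emph{angular hypothesis in Vojta's inequality}. With your hypothesis $\pair{x,y}\geq\frac34|x||y|$, any cone/shell count gives a base bounded away from $1$. The paper's Theorem~\ref{vojta} assumes only $\pair{x,y}\geq\sqrt{1.01/g}\,|x||y|$, and concludes $1.15|y|\leq|x|\leq10^5g^{5/2}|y|$. This is obtained by taking $\delta_1\approx1.2\sqrt g\,|y|/|x|$ and $\delta_2\approx1.2\sqrt g\,|x|/|y|$ in the Vojta bundle. Then $\delta_1\delta_2$ exceeds $g$ only by a bounded factor, while $L^2=2\delta_1\delta_2-2g>0$ still holds. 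Sphere-packing bounds of Rankin and Kabatjanski\u i--Leven\v ste\u in, applied to configurations with pairwise inner products below $\sqrt{1.01/g}$, turn this into $(1+\frac{3\log g}{g})^r$. The per-direction factor is about $\log_{1.15}(10^5g^{5/2})+1$.

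\textbf{The intermediate range.} This range cannot be handled by ``matching thresholds'' together with Mumford's gap. The error $E$ comes from a lower bound for $\sum_v g_v(x,y)$, which is only controlled by a large $g$-dependent multiple of $\bar\omega_{C/K,a}^2$. Accordingly, in the paper even the Mumford part $|x|\geq1.15|y|$ of Theorem~\ref{vojta} is only available above $R_2=1.2\cdot10^9g^{7/3}\sqrt{\bar\omega_{C/K,a}^2}$. The Bogomolov bound, by contrast, reaches only $R_1=\sqrt{\bar\omega_{C/K,a}^2/(16g)}$. Ball packing across the gap $R_2/R_1\approx4.8\cdot10^9g^{17/6}$ gives the useless base $1+R_2/R_1$, which is exactly your $(1+2\sqrt{c_3/c_4})^r$.

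The missing ingredient is the second inequality of Theorem~\ref{quant bog}. For any degree-one $\beta$, at most $3.2\cdot10^{11}g^{17/3}$ points $x$ satisfy both $|\beta-\alpha|\leq|x-\alpha|\leq2|\beta-\alpha|$ and $\angle(x-\alpha,\beta-\alpha)\leq\arccos\sqrt{1.13/g}$. It comes from the same Looper--Silverman--Wilms quadratic identity. The point is that when all $x_i-\alpha$ have comparable lengths and lie in such a cone, $\hat h(x_1+\cdots+x_n-n\alpha)_K$ dominates $\sum_{i<j}\hat h(x_i-x_j)_K$. Applying this on the $\lceil\log_2(R_2/R_1)\rceil$ dyadic shells, together with sphere packing in the directions, bounds the medium points by $1.64\cdot10^{13}g^7(1+\frac{5}{2\sqrt2g})^r$.

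\textbf{The small-point step.} As written, this step has no mechanism for explicit constants. An explicit lower bound for the essential minimum gives finiteness but no count. Equidistribution and the Zhang--Ullmo difference-map argument are ineffective. The count in the paper comes from the quadratic identity combined with $\sum_{i<j}\bar x_i\cdot\bar x_j\geq -O(n\log n)\,\bar\alpha^2$. That bound follows from the local inequalities $\mathrm{FE}_v(n,C)+c(n,g)\varphi_v(C)\geq0$ (via systole estimates at archimedean places) and Wilms' bound $\bar\omega_{C/K,a}^2\geq\frac{\max\{g-1,2\}}{2g+1}\varphi(C)$.

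\textbf{Normalization.} Measure all thresholds by $\bar\omega_{C/K,a}^2$ rather than $h_{\Fal}(C)$. The comparison between them in Theorem~\ref{fiberwise11} has inexplicit constants.
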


As in the Mordell--Lang philosophy, Theorem \ref{quant Mordell} is also valid for subgroups of $J(\ol K)$ of finite rank. 
In fact, \cite{YYZ} actually proves the following result. 

\begin{theorem}[Yu--Yuan--Zhou \cite{YYZ}]
\label{quant Mordell 2}
Let $C$ be a curve of genus $g>1$ over $\CC$, $J$ be the Jacobian variety of $C$ over $\CC$, $\Lambda$ be a subgroup of $J(\CC)$, 
$\beta$ be a divisor on $C$ of degree 1, and 
$i_\beta:C\to J$ be the Abel--Jacobi map $x\mapsto x-\beta$.
Then 
$$
 |i_\beta(C)(\CC)\cap \Lambda | \leq  10^{13} g^8 \big(1+\frac{3\log g}{g}\big)^{\dim_{\QQ}(\Lambda\otimes_\ZZ\QQ)}.
$$ 
\end{theorem}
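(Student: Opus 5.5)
We reduce Theorem \ref{quant Mordell 2} to the case where $C$, $\beta$ and $\Lambda$ are defined over $\ol\QQ$. Then we split the points of $i_\beta(C)\cap\Lambda$ into small and large points with respect to the Néron--Tate height. For the large points we use a quantitative Vojta inequality; for the small points we use a quantitative Bogomolov bound combined with sphere packing.

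\emph{Reduction to $\ol\QQ$.} We may assume $\Lambda$ has finite rank $r=\dim_\QQ(\Lambda\otimes\QQ)$. Replacing $\Lambda$ by its division hull changes nothing essential, since torsion is absorbed by the same counting. By a specialization argument, choose a finitely generated $\QQ$-algebra $R$ over which $C$, $\beta$ and generators of $\Lambda$ are defined. Specializing at a suitable $\ol\QQ$-point of $\Spec R$ is injective on a given finite subset of the intersection and does not increase the rank (Masser-type injectivity of specialization on finitely generated groups, or Silverman's specialization theorem). So it suffices to treat $C$ over a number field and $\Lambda\subset J(\ol\QQ)$ of rank $r$. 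Points of $i_\beta(C)\cap\Lambda$ then lie in $\Lambda_\RR$, a Euclidean space of dimension $r$ with the Néron--Tate norm $|\cdot|$.

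\emph{Large points.} Let $h$ be a suitable normalized height of $C$, for example the Faltings height or the stable Arakelov self-intersection, so that $\hat h(x-\beta)\ge c\,h$ is a reasonable threshold. Vojta's inequality, in Rémond's explicit form, gives two things. First, two points $x,y$ with $|y|\ge|x|$ both large and angle $<\theta$ must have $|y|\ge \kappa|x|$ with $\kappa$ absolute. Second, Mumford's gap principle controls the medium range. A cone-covering of $\RR^r$ by caps of angle $\theta$, with $\cos\theta\approx 3/4$-type constants sharpened so that caps number $\le(1+3\log g/g)^r$, then bounds the number of large points by $g^{O(1)}(1+\frac{3\log g}{g})^r$. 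Getting this base requires using Mumford's inequality $\langle x,y\rangle\le \frac{1}{2g}(|x|^2+|y|^2)+\dots$ in its sharp $1/g$ form, which produces angle separation close to $\pi/2$ as $g\to\infty$. The covering count for nearly orthogonal directions then gives a base $1+O(\log g/g)$.

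\emph{Small points.} Here we use a quantitative uniform Bogomolov statement: there are explicit $c_3(g)\ge g^{-O(1)}$ and $c_4(g)\le g^{O(1)}$ such that $\#\{x: \hat h(x-\beta)\le c_3 h\}\le c_4$, up to an error controlled by the height. We would derive it from the adelic line bundle approach of \cite{Yua21} or an arithmetic Zhang admissible pairing, together with explicit lower bounds on $\omega_a^2$ via $\varphi$-invariants and Wilms' bounds. Sphere packing with balls of radius $\sqrt{c_3h}$ then counts the points in the intermediate annulus up to the Vojta threshold, again with base close to $1$ after rescaling.

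\emph{Main obstacle.} The hardest step is making the Bogomolov part fully explicit at archimedean places with polynomial dependence on $g$. The obstacle is that the threshold for small points must dominate the Vojta lower threshold, and the two thresholds must be matched. I would first try Wilms' explicit lower bounds for the $\delta$ and $\varphi$ invariants, combined with Zhang's formula $\omega_a^2\ge \frac{2g-2}{2g+1}\sum\varphi_v$, to obtain an explicit $c_3$.
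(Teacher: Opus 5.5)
Your outline (specialize to a number field, then treat large points by Vojta, Mumford and sphere packing, and small points by an explicit Bogomolov bound) is broadly the paper's. However, the two steps that actually produce the base $1+\frac{3\log g}{g}$ are missing, and as you describe them they would fail.

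\emph{Large points.} You have the roles reversed. The gap $|y|\ge\kappa|x|$ is Mumford's inequality. The content of Vojta's inequality is the upper bound $|x|\le c\,|y|$ for two large points at small angle, and this never appears in your text; without it a cone can contain infinitely many points. More seriously, the base of the exponential is governed by the angle in Vojta's hypothesis. An angle hypothesis bounded away from orthogonality (Bombieri's $\cos\angle(x,y)\ge 3/4$, or R\'emond's explicit form) gives a base bounded away from $1$. Mumford's near-orthogonality cannot repair this, because it constrains only pairs of comparable norm. For example, take points $\lambda^k w_k$, where $\{w_k\}$ is a spherical code with pairwise cosines $<3/4$ (exponentially many in $r$, with a fixed base) and $\lambda$ is huge. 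This configuration is consistent with both Mumford's inequality and a $3/4$-type Vojta inequality. What the paper supplies is a new quantitative Vojta inequality with the sharp hypothesis $\pair{x,y}\ge\sqrt{1.01/g}\,|x|\,|y|$. It comes from the Vojta bundle with $\delta_1\delta_2>g$, so that $L^2=2\delta_1\delta_2-2g>0$, via the arithmetic Siu inequality and Dyson's lemma. It holds for $|x|\ge|y|\ge 1.2\cdot 10^{9}g^{7/3}\sqrt{\bar\omega_{C/K,a}^2}$, with conclusion $1.15|y|\le|x|\le 10^5g^{5/2}|y|$. Only this near-orthogonal angle, fed into the Rankin and Kabatjanski\u i--Leven\v ste\u in bounds, yields a base tending to $1$.

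\emph{Points below the Vojta threshold.} Packing balls of the Bogomolov radius $R_1=\sqrt{\bar\omega_{C/K,a}^2/(16g)}$ inside the ball of radius $R_2=1.2\cdot10^9g^{7/3}\sqrt{\bar\omega_{C/K,a}^2}$ gives $(1+R_2/R_1)^r=(1+4.8\cdot10^9g^{17/6})^r$. No rescaling changes $R_2/R_1$, since that ratio is fixed by the two thresholds. Mumford's inequality does not rescue this range either: in the paper it is only established above $R_2$.

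The paper's extra ingredient is an angular Bogomolov estimate, the second inequality of Theorem \ref{quant bog}. It states there are at most $3.2\cdot10^{11}g^{17/3}$ points with $|\beta-\alpha|\le|x-\alpha|\le2|\beta-\alpha|$ and $\angle(x-\alpha,\beta-\alpha)\le\arccos\sqrt{1.13/g}$. It is proved from the Looper--Silverman--Wilms quadratic identity, in which $(g-1)\hat h(x_1+\cdots+x_n-n\alpha)_K\ge(g-1)n^2(|\alpha-\beta|-\sqrt t)^2$ absorbs $2n^2t$. It is then applied on dyadic shells $2^iR_1\le|x|<2^{i+1}R_1$, with spherical packing of directions.

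Making this explicit at archimedean places needs more than a lower bound on $\bar\omega_{C/K,a}^2$. Also, \cite{Yua21} rests on compactness over moduli and is not explicit. The real input is the uniform energy bound $\mathrm{FE}(n,X)+(7\cdot10^4g^{1/3}n\log n+5\cdot10^9g^{14/3}n)\varphi(X)\ge0$ over all Riemann surfaces of genus $g$. \cite{YYZ} obtain it from systole bounds on both invariants.

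Two smaller points:
\begin{enumerate}
\item The inequality $\bar\omega_{C/K,a}^2\ge\frac{2g-2}{2g+1}\varphi(C)$ you quote would require nonnegativity of the Gross--Schoen height. Use Theorem \ref{wilms} instead.
\item Both inequalities are centered at $\alpha$, not $\beta$. You should therefore work in $(\Lambda+\ZZ(\beta-\alpha))\otimes\RR$, which has dimension $\le r+1$, rather than in $\Lambda_\RR$ with the height $\hat h(x-\beta)$.
\end{enumerate}
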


The theorem has a very non-arithmetic statement. However,  by a simple specialization argument, it is immediately reduced to the case where $(C,J, \beta, \Lambda)$ is defined over a number field $K$. Then it has the same flavor as  Theorem \ref{quant Mordell}.

In the theorem, the Manin--Mumford constant $c_1(g)=10^{13} g^8$, which bounds the number of torsion points on $C$ by taking $\Lambda=J(\CC)_{\tor}$, 
 is a polynomial in $g$. We suspect that it is far from being optimal. 
The Vojta constant $\displaystyle c_2(g)=1+\frac{3\log g}{g}$, which essentially comes from Vojta's proof of the Mordell conjecture, tends to 1 as $g\to\infty$. This responds to a question of Gao and Habegger (cf. \cite[Que. 1.19]{Hab22}).  

The proof of Theorem \ref{quant Mordell} (and Theorem \ref{quant Mordell 2}) still consists of bounding points by different heights.
For large points, we prove a quantitative version of Vojta's inequality by carefully analyzing his original proof. 
For small points, it is difficult to obtain explicit constants from the proofs of \cite{DGH21, Kuh} and \cite{Yua21}, so our approach is to carry out the idea of Looper--Silverman--Wilms \cite{LSW} to prove a quantitative Bogomolov conjecture over number fields. 
To do this, we have overcome all the archimedean obstacles by proving many desired inequalities on invariants of compact Riemann surfaces.  
To obtain a sharp Vojta constant $c_2(g)$, we improve the bounds on points of medium heights by a refined version of the quantitative Bogomolov conjecture inspired by \cite{Yua21}. 

The goal of this survey is to explain some background and  ideas of the proof of Theorem \ref{quant Mordell}. Our topic is closely related to the previous surveys of Gao \cite{Gao21} and Habegger \cite{Hab22}. 
While  \cite{Gao21} mainly concerns the proof of the uniformity result in Theorem \ref{uniform Mordell}, and \cite{Hab22} gives a panoramic view of various related problems and results on the Mordell conjecture, 
our current survey mainly concerns the proof of the quantitativity result in Theorem \ref{quant Mordell}.
We refer the readers to these two surveys for more aspects of the topic.

\section{Preliminary on heights and arithmetic intersections}
The goal of this section is to review some basic theory of heights and arithmetic intersection theory. 

Throughout this paper, by a \emph{variety} over a field $F$, we mean an integral scheme separated of finite type over $F$. By a \emph{curve} over a field $F$, we mean a smooth and geometrically connected projective variety of dimension 1 over $F$. In Diophantine problems, we always assume that the genus of the curve is $g>1$.

We usually write tensor products of line bundles additively. For example, $aL-bM$ means $L^{\otimes a}\otimes M^{\otimes (-b)}$ for line bundles $L,M$ and integers $a,b$. We allows $a,b$ to be rational numbers to get $\QQ$-line bundles.

\subsection{Heights}
We start with the classical theory of Weil heights and N\'eron--Tate heights. 

\subsubsection{Weil heights}
Let us briefly recall the definition of Weil heights on projective varieties and canonical heights on abelian varieties. For a systematic introduction, we refer to \cite{Ser89, HS00, BG}. 

Let $K$ be a number field. 
Denote by $M_K$ the set of places of $K$. For any place $v$ of $K$, normalize the absolute value $|\cdot|_v$ on the completion $K_v$ as follows.
\begin{enumerate}
\item If $v$ is an archimedean place, take $|\cdot|_v$ to be the usual absolute value on $K_v\cong\RR,\CC$;
\item If $v$ is a non-archimedean place, set $|a|_v=\big(\#(O_{K_v}/aO_{K_v})\big)^{-1}$ for any $a\in O_{K_v}$. 
\end{enumerate}
The valuations satisfy the product formula 
$$
\prod_{v\in M_K} |a|_v^{\epsilon_v} =1, \quad \forall \ a\in K^\times. 
$$
Here $\epsilon_v=1$ for all real or non-archimedean places $v$; $\epsilon_v=2$ for all complex places $v$.

Let $\PP^n$ be the projective space over $K$.
The \emph{standard height function $h: \PP^n(\overline K)\to \RR$} is defined to be
$$h(x_0, x_1, \cdots, x_n)=\frac{1}{[K':\QQ]}\sum_{w\in M_{K'}} \epsilon_w \log \max\{|x_0|_w, |x_1|_w,\cdots, |x_m|_w \},$$
where $K'$ is a finite extension of $K$ containing all the coordinates $x_i$, and the sum is over all places $w$ of $K'$. 

Let $X$ be a projective variety over $K$, and $L$ be any ample line bundle on $X$. 
Let $i: X\to \PP^n$ be any morphism such that $i^*\CO(1)=L^{\otimes d}$ for some $d\geq 1$. 
We obtain a height function $h_{L,i}= \frac1d h\circ i : X(\overline K)\to \RR$ as the composition of 
$i: X(\overline K)\to \PP^n(\overline K)$ and $\frac1d h: \PP^n(\overline K)\to \RR$. 
It depends on the choices of $d$ and $i$.

More generally, let $L$ be any line bundle on $X$. We can always write $L= A_1\otimes A_2^{\otimes (-1)}$ for two ample line bundles $A_1$ and $A_2$ on $X$. For $k=1,2$, let $i_k: X\to \PP^{n_k}$ be any two morphisms such that $i_k^*\CO(1)=A_k^{\otimes d_k}$ for some $d_k\geq 1$. 
We obtain a height function $h_{L,i_1,i_2}= h_{A_1,i_1}-h_{A_2,i_2}: X(\overline K)\to \RR$. 
It depends on the choices of $(A_1, A_2, i_1,i_2)$.
However, the following result asserts that it is unique up to bounded functions.

\begin{theorem}[Weil's height machine] \label{Weil}
The above construction $L\mapsto h_{L,i_1,i_2}$ gives a group homomorphism
$$
\CH: \mathrm{Pic}(X) \longrightarrow 
\frac{\{ \mathrm{functions\ }\phi: X(\overline K)\to \RR \}}
{ \{ \mathrm{bounded\ functions\ }\phi: X(\overline K)\to \RR \}}.
$$
\end{theorem}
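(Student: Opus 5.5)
The statement has two parts. First, the class of $h_{L,i_1,i_2}$ modulo bounded functions must not depend on the choices $(A_1,A_2,i_1,i_2,d_1,d_2)$. Second, the resulting map must be additive. The approach reduces everything to one basic estimate for morphisms to projective space:
\begin{quote}
(*) If $i:X\to\PP^n$ and $j:X\to\PP^m$ satisfy $i^*\CO(1)\cong j^*\CO(1)$, then $h\circ i-h\circ j$ is bounded on $X(\overline K)$.
\end{quote}

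To prove (*), write $i=(s_0:\cdots:s_n)$ and $j=(t_0:\cdots:t_m)$ with $s_k,t_l\in H^0(X,M)$ for $M=i^*\CO(1)$. Each family has no common zeros. By the Nullstellensatz, each $s_k^e$ for some $e\geq 1$ lies in the ideal generated by the $t_l$ in the homogeneous coordinate ring, and the symmetric statement also holds. So we can write $s_k^{e}=\sum_l a_{kl}t_l$ with $a_{kl}\in H^0(X,M^{e-1})$. The $a_{kl}$ are given by finitely many polynomials with coefficients in a fixed number field.

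At each place $w$, the triangle inequality then gives
$$\max_k|s_k(x)|_w^{e}\le C_w\max_l|t_l(x)|_w\cdot\max|\text{monomials of degree }e-1\text{ in }t(x)|_w,$$
with $C_w=1$ for all but finitely many $w$. Here we use the ultrametric inequality and the fact that the coefficients are integral outside a finite set of places. This yields $e\,h(i(x))\le e\,h(j(x))+O(1)$, after controlling the $a_{kl}$ term, which is itself a height bounded by $(e-1)h(j(x))+O(1)$. The product formula makes the choice of representative coordinates irrelevant. Symmetry then gives (*).

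This local-to-global bookkeeping of the constants $C_w$ is the main technical obstacle. The key point is that only finitely many places contribute, so the sum of $\log C_w$ over all places is finite.

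Next we pass to powers and Segre maps. For $i^*\CO(1)=A^{\otimes d}$, composing with the $e$-th Veronese embedding $v_e$ gives $h\circ v_e=e\,h$. So $\frac1d h\circ i$ and $\frac1{de}h\circ(v_e\circ i)$ coincide, and different $d$ can be compared at a common multiple. Combined with (*), this shows that $h_{A,i}$ is well defined modulo bounded functions for ample $A$.

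For additivity on ample bundles, use the Segre map $\sigma$. The height satisfies $h(\sigma(y,z))=h(y)+h(z)$ exactly, because at each place the max over products equals the product of the maxes. Hence $h_{A\otimes B}=h_A+h_B+O(1)$.

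Finally, consider general $L$. If $A_1-A_2=A_1'-A_2'$, then $A_1+A_2'\cong A_1'+A_2$, and additivity on ample bundles gives $h_{A_1}-h_{A_2}=h_{A_1'}-h_{A_2'}+O(1)$. So $\CH$ is well defined on all of $\Pic(X)$. Additivity of $\CH$ on general bundles follows from the same identity for ample bundles.
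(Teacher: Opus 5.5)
Your overall scheme is the standard one from the references the survey cites (the survey itself gives no proof): the comparison estimate (*), Veronese maps to compare different $d$, Segre maps for additivity on ample bundles, then differences of amples. But your proof of (*) has a genuine gap, and the gap is not the bookkeeping of the constants $C_w$.

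The Nullstellensatz in the section ring gives $s_k^e=\sum_l a_{kl}t_l$ with $a_{kl}\in H^0(X,M^{\otimes(e-1)})$ \emph{arbitrary} sections. In general these are not polynomials in $t_0,\dots,t_m$. The $t_l$ need not span $H^0(X,M)$, and even a spanning family need not generate the higher graded pieces of the section ring. So your displayed inequality, which amounts to $|a_{kl}(x)|_w\le C_w\max_l|t_l(x)|_w^{e-1}$, is unjustified. Your fallback, that the $a_{kl}$ term is ``itself a height bounded by $(e-1)h(j(x))+O(1)$'', is an upper bound of exactly the type (*) is supposed to supply: arbitrary sections of $M^{\otimes(e-1)}$ measured against the base-point-free family $t$. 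Invoking it at the core step is circular.

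The fix is to make the Nullstellensatz coefficients polynomials in the coordinates of the side you divide by, and to compare both maps with the complete linear system. Let $u_0,\dots,u_N$ be a basis of $H^0(X,M)$, let $\lambda=(u_0:\cdots:u_N)$, and let $W=\lambda(X)\subset\PP^N$. Write $|u(x)|_w=\max_k|u_k(x)|_w$ and $|t(x)|_w=\max_l|t_l(x)|_w$.

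Then $t_l=\ell_l(u)$ for linear forms $\ell_l$ with no common zero on $W$. The Nullstellensatz in $\overline K[X_0,\dots,X_N]$ gives $X_k^e\equiv\sum_l g_{kl}\ell_l \pmod{I(W)}$ with each $g_{kl}$ homogeneous of degree $e-1$. Evaluating at $\lambda(x)$ yields $|u(x)|_w^e\le C_w|u(x)|_w^{e-1}|t(x)|_w$, i.e. $|u(x)|_w\le C_w|t(x)|_w$. The reverse bound $|t(x)|_w\le C'_w|u(x)|_w$ is trivial by linearity. Hence $h\circ j=h\circ\lambda+O(1)$, likewise for $i$, and (*) follows.

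Alternatively, when $M$ is ample, $j$ is finite. Each $s_k$ then satisfies a monic equation over $\overline K[t_0,\dots,t_m]$ with homogeneous coefficients, and the usual bound on roots gives $|s(x)|_w\le C_w|t(x)|_w$.

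With (*) repaired, the rest of your argument goes through. One small point: in the Segre step, first use Veronese maps to make the exponents attached to $A$ and $B$ equal, so that $\sigma\circ(i,j)$ pulls $\CO(1)$ back to a power of $A\otimes B$.
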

A function $h_L: X(\overline K)\to \RR $ in the class $\CH(L)$ in the theorem is called a \emph{Weil height function} corresponding to $L$.

\subsubsection{N\'eron--Tate heights}  \label{canonical height}
In the following, we refer to  \cite{Mum08, Mil08} for the basics of abelian varieties, to 
\cite{Mil86} for the basics of Jacobian varieties, and  to 
 \cite{Ser89, HS00, BG} for the basics of N\'eron--Tate heights. 

Let $A$ be an abelian variety over a number field $K$. Denote by $[m]:A\to A$ the multiplication by an integer $m$. 
Let $L$ be a symmetric and ample line bundle on $A$. 
Here $L$ is called \emph{symmetric} if $[-1]^*L\cong L$, which implies $[m]^*L\cong L^{\otimes m^2}$ for all integers $m$.  
Let $h_L: A(\overline K)\to \RR $ be any Weil height function corresponding to $L$.
The \emph{N\'eron--Tate height} (or the \emph{canonical height})
$$\hat h_{L}: A(\overline K)\lra \RR$$
with respect to $L$ is defined by Tate's limit
$$\hat h_{L}(x) =\lim_{n\rightarrow \infty} \frac{1}{4^n}h_{L}(2^n x), \quad 
x\in A(\overline K).$$
An easy argument proves the convergence and independence of the choice of the Weil height function $h_L$. Moreover, $\hat h_L:A(\overline K)\to \RR$ is also a Weil height function associated to $L$. 
We further have the following quadraticity and positivity properties.

\begin{theorem}\label{canonical}
\begin{enumerate}
	\item $\hat h_L(x)\geq 0$ for any $x\in A(\overline K)$, and the equality holds if and only if $x$ is torsion.
	\item 
The function $\hat{h}_L: A(\overline{K})\to\RR$ is  {quadratic} in the sense that
 it satisfies the parallelogram rule 
 $$
 \hat{h}_L(x+y)+\hat{h}_L(x-y)=2\left(\hat{h}_L(x)+\hat{h}_L(y)\right),\quad  \forall x,y\in A(\overline{K}).
 $$
 Moreover, we have
 $$
 \hat{h}_L(mx)=m^2\hat{h}_L(x),\quad  \forall m\in\ZZ,\quad  \forall x\in A(\overline{K}).
 $$
 \item 
 The quadratic form $\hat{h}_L: A(\overline{K})\otimes_\ZZ\RR \to\RR$, induced by the height function $\hat{h}_L: A(\overline{K})\to\RR$ via the quadraticity, is positive definite on the real vector space $A(\overline{K})\otimes_\ZZ\RR$. 
\end{enumerate}
\end{theorem}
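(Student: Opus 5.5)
The plan is to derive all three parts from Tate's limit construction together with the theorem of the cube and Northcott's property; I would prove (2) first, since (1) and (3) lean on it. The basic tool is a characterization of $\hat h_L$: it is the unique Weil height for $L$ satisfying $\hat h_L(2x)=4\hat h_L(x)$. Uniqueness holds because the difference $\phi$ of two such functions is bounded and satisfies $\phi(2x)=4\phi(x)$, hence $\phi(x)=4^{-n}\phi(2^nx)\to 0$.

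For (2), I use the theorem of the cube. For symmetric $L$ on $A\times A$, it gives
$$
m^*L+d^*L\cong 2p_1^*L+2p_2^*L,
$$
where $m,d,p_1,p_2:A\times A\to A$ are sum, difference and the two projections. Weil's height machine (Theorem \ref{Weil}), together with functoriality of heights under pullback (which holds for morphisms to projective space by construction), then gives
$$
h_L(x+y)+h_L(x-y)=2h_L(x)+2h_L(y)+O(1).
$$
Replace $(x,y)$ by $(2^nx,2^ny)$, divide by $4^n$, and let $n\to\infty$; the error term disappears and the parallelogram law holds exactly. A standard induction on the parallelogram law gives $\hat h_L(mx)=m^2\hat h_L(x)$. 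It follows that $\langle x,y\rangle=\tfrac12(\hat h_L(x+y)-\hat h_L(x)-\hat h_L(y))$ is biadditive, so $\hat h_L$ is a quadratic form on $A(\overline K)$ and extends to $A(\overline K)\otimes\RR$.

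For (1), nonnegativity comes from choosing $h_L=h_{L,i}$ with $i$ given by a very ample power of $L$. The standard height on $\PP^n$ is $\geq 0$, so $h_L\geq 0$, and the limit $\hat h_L$ is therefore $\geq 0$. If $x$ is torsion, then $\{2^nx\}$ is a finite set, so $h_L(2^nx)$ is bounded and $\hat h_L(x)=0$. Conversely, suppose $\hat h_L(x)=0$ with $x$ defined over a finite extension $K'$. Then $\hat h_L(mx)=0$ for all $m$, so every multiple $mx$ lies in $A(K')$ and has Weil height bounded by the constant $\sup|\hat h_L-h_L|$. Northcott's property (finiteness of points of bounded height and degree) makes $\{mx\}$ finite, so $x$ is torsion.

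For (3), positive definiteness is the main obstacle, since vanishing only at torsion does not prevent $\hat h_L$ from being degenerate on real combinations of points. First I reduce to finite-dimensional subspaces. Any $v\in A(\overline K)\otimes\RR$ lies in $\Gamma\otimes\RR$ for $\Gamma=A(K')$ with $K'$ a suitable number field, and $\Gamma$ is finitely generated by Mordell--Weil. On $V=\Gamma\otimes\RR$, the lattice $\Lambda$ is the image of $\Gamma$ (that is, $\Gamma$ modulo torsion), and $\hat h_L$ is a positive semidefinite form, being a limit of nonnegative values on $\Lambda\otimes\QQ$ by quadraticity. By Northcott, for each $c>0$ the set $\{x\in\Lambda:\hat h_L(x)\le c\}$ is finite.

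Now suppose $q=\hat h_L$ has a nonzero null vector $v\in V$. Then $q$ vanishes on the null space $N\neq 0$ of this semidefinite form, and the ellipsoid-like region $\{q\le 1\}$ contains the cylinder $N+B$ for a small ball $B$ in a complement of $N$. This region is a convex symmetric set of infinite volume. Minkowski's theorem then gives infinitely many lattice points of $\Lambda$ with $q\le 1$ (apply it to larger and larger bounded convex symmetric pieces), contradicting finiteness. Hence $q$ is positive definite.
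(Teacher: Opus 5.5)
Your proposal is correct. The paper does not prove this theorem; it quotes it as background and points to \cite{Ser89, HS00, BG}. Your argument is essentially the standard proof in those references: the theorem of the cube plus Tate's limit for quadraticity, Northcott for the torsion characterization, and a Minkowski-type lattice argument for positive definiteness.

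Three places could be tightened.

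\textbf{Mordell--Weil in (3).} It is not needed. A given $v\in A(\overline K)\otimes_\ZZ\RR$ is a finite sum $\sum_i x_i\otimes r_i$. So it comes from $\Gamma\otimes\RR$ with $\Gamma=\langle x_1,\dots,x_k\rangle$, which is finitely generated and contained in $A(K')$ for a single number field $K'$. That is all Northcott requires.

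\textbf{The Minkowski step.} Applying Minkowski to larger and larger bounded pieces of $\{q\le 1\}$ gives one nonzero lattice point each time, but not obviously \emph{distinct} ones. It is cleaner to note that $\{q\le\varepsilon\}\supset N+\sqrt{\varepsilon}\,B$ still has infinite volume for every $\varepsilon>0$. Hence $\Lambda$ has nonzero points with arbitrarily small $q$. On the other hand, if $\{x\in\Lambda: q(x)\le 1\}$ were finite, $q$ would have a positive minimum on its nonzero elements. A nonzero $x$ with $q(x)=0$ cannot occur there, since its multiples $mx$ are pairwise distinct in the torsion-free group $\Lambda$.

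\textbf{Functoriality of heights.} The pullback functoriality $h_{f^*L}=h_L\circ f+O(1)$ that you use for $m,d,p_1,p_2$ belongs to the full height machine, not to Theorem \ref{Weil} as stated in the paper. The bundles $m^*L$, $d^*L$ are only globally generated, not ample, so you need the standard version of the machine for morphisms $j$ to projective space with $j^*\CO(1)$ globally generated.
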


Let $K$ be a number field. Let $C$ be a curve of genus $g>1$ over $K$. 
Denote by $J$ the Jacobian variety of $C$ over $K$.  
As the multiplication
 $[2g-2]\colon J({\ol K})\to  J({\ol K})$
is surjective, there is a line bundle $\alpha$ on $C_{\ol K}$ such that $(2g-2)\alpha$ is isomorphic to the canonical sheaf $\omega_C$. 
Replacing $K$ by a finite extension if necessary, we can assume that $\alpha$ is actually a line bundle on $C$. 
We will see that this process does not change our definition of heights. 

Consider the Abel--Jacobi embedding
$$
i_\alpha: C\lra J, \quad  x\longmapsto x-\alpha.
$$
Recall that the theta divisor on $J$ is given by
$$
\theta_\alpha= \underbrace{i_\alpha(C)+\dots + i_\alpha(C)}_{g-1 \text{ copies}}.$$
It is well-known that $\theta_\alpha$ is ample and gives a principal polarization of $J$. 
By \cite[p. 74, eq. (1)]{Ser89}, $\theta_\alpha$ is symmetric in the sense that $[-1]^*\theta_\alpha$ is linearly equivalent to $\theta_\alpha$. 

Denote $J(\ol K)_\RR=J(\ol K)\otimes_\ZZ \RR$. 
As in Theorem \ref{canonical}, the N\'eron--Tate height 
$$\hat h= \hat h_{\theta_\alpha}\colon J(\ol K)\lra \RR$$
is quadratic and induces a positive definite quadratic form
$$\hat h= \hat h_{\theta_\alpha}\colon J(\ol K)_\RR\lra \RR.$$
For convenience, we take the normalization 
$$
\hat h(x)_K=[K:\QQ]\cdot \hat h(x), \quad x\in J(\OK). 
$$
The factor $[K:\QQ]$ will bring convenience when comparing heights with arithmetic intersection numbers.

Denote the N\'eron--Tate height pairing
$$\pair{x,y}\colon J(\overline{K})\times J(\overline{K})\longrightarrow \BR$$
over $K$
by 
$$ \pair{x,y}= \frac12 \big(\hat{h}(x+y)_K-\hat{h}(x)_K-\hat{h}(y)_K\big), \quad x, y\in J(\ol K).$$
The associated norm of the pairing is given by  
$$|x| := \sqrt{\hat{h}(x)_K}, \quad x\in J(\ol K).$$
These definitions extend to $x, y\in J(\ol K)_\RR$ canonically. 

We apply $\hat h(\cdot)$, $\hat h(\cdot)_K$, $|\cdot|$ and $\pair{\cdot,\cdot}$ to $C(\ol K)$ via the embedding $i_\alpha: C\to J$. 
Here $\alpha$ is a line bundle on $C_{\ol K}$ with 
$(2g-2)\alpha\cong \omega_{C/K}$.
For example, for $x\in C(\OK)$, we have
$$\hat h(x)=\hat h(i_\alpha(x))=\hat h(x-\alpha)=\hat h_{\theta_\alpha}(x-\alpha), \quad
|x|^2=\hat h(x)_K=[K:\QQ]\hat h(x).
$$

\subsection{Arithmetic intersection of hermitian line bundles}
Let us introduce some terminology of Arakelov geometry developed by Arakelov \cite{Ara74}, Deligne \cite{Del}, and Gillet--Soul\'e \cite{GS90}. 

Let $\CX$ be an arithmetic variety of dimension $n+1$, i.e. a projective and flat integral scheme over $\Spec \ZZ$ of absolute dimension $n+1$. 
A \emph{hermitian line bundle} on $\CX$ is a pair  $\CLL=(\CL,\VCV)$, where $\CL$ is a line bundle on $\CX$, and $\VCV$ is a smooth hermitian metric of $\CL(\CC)$ on the complex variety $\CX(\CC)$ invariant under the complex conjugation. 
If $\CX(\CC)$ is singular, the smoothness of the metric means that the metric is locally equal to the pull-back of a smooth metric via a closed embedding into a smooth complex manifold. 

Let $\lbb_1, \lbb_2, \cdots, \lbb_{n+1}$ be $n+1$ hermitian line bundles on $\CX$, and let $s_{n+1}$ be any non-zero rational section of $\lb_{n+1}$ on $\CX$. 
The intersection number is defined inductively by 
\begin{eqnarray*}
 \lbb_1\cdot \lbb_2 \cdots \lbb_{n+1}
= \lbb_1\cdot \lbb_2 \cdots \lbb_{n} \cdot \wdiv(s_{n+1})
- \int_{\CX(\CC)} \log \|s_{n+1}\| c_1(\lbb_1)\cdots c_1(\lbb_n).
\end{eqnarray*}
The right-hand side depends on the Weil divisor $\wdiv(s_{n+1})$ linearly, so it suffices to explain the case that $\CD=\wdiv(s_{n+1})$ is irreducible (and reduced). 

If $\CD$ is horizontal in the sense that it is flat over $\ZZ$, then
$$
\lbb_1\cdot \lbb_2 \cdots \lbb_{n} \cdot \CD= \lbb_1|_\CD\cdot \lbb_2|_\CD \cdots \lbb_{n}|_\CD
$$
is an arithmetic intersection on $\CD$. 
If $\CD$ is a vertical divisor in the sense that it is a variety over $\BF_p$ for some prime $p$, then the integration is zero, and 
$$
\lbb_1\cdot \lbb_2 \cdots \lbb_{n} \cdot \CD= (\lb_1|_\CD\cdot \lb_2|_\CD \cdots \lb_{n}|_\CD)\log p.
$$
Here the intersection is the usual intersection on the projective variety $\CD$ over $\BF_p$.

The definition does not depend on the choice of the rational section $s_{n+1}$.  
It gives a symmetric and multi-linear intersection pairing
$$
\wh\Pic(\CX)^{n+1} \longrightarrow \RR. 
$$
Here $\wh\Pic(\CX)$ denotes the group of isometry classes of hermitian line bundles on $\CX$. 
The intersection number is further compatible with birational morphisms of arithmetic varieties.

\begin{example} \label{degree}
If $\dim \CX=1$, the intersection is just an arithmetic degree map
$$
\wh{\deg}: \wh\Pic(\CX) \longrightarrow \RR. 
$$
If $\CX$ is normal, then it is isomorphic to $\Spec(O_K)$ for some number field $K$.
Then $\CL_1$ is just an $O_K$-module locally free of rank one.
The Hermitian metric $\|\cdot\|$ on $\CL_1(\CC)= \oplus_{\sigma: K\to \CC} \CL_{1,\sigma}(\CC)$
is a collection $(\|\cdot\|_\sigma)_\sigma$ of metrics on the complex line $\CL_\sigma(\CC)$. 
The arithmetic degree is just 
$$
\wh{\deg}(\overline\CL_1)= \log \#(\CL_1/O_Ks) -\sum_{\sigma: K\to \CC} \log \|s\|_\sigma. 
$$
Here $s$ is any non-zero element of $\CL_1$. 
\end{example}

Let $\CX$ be an arithmetic variety and $\CLL=(\CL,\VCV)$ be a  {hermitian line bundle} on $\CX$. We say that $\CLL$ is \emph{nef} if its hermitian metric is semipositive and the intersection number $\CLL\cdot \CCC\geq0$ for any 1-dimensional closed integral subscheme $\CCC$ of $\CX$.

\subsection{Faltings heights}
In this subsection, we review Faltings heights of abelian varieties, which are very crucial in the proof of the Mordell conjecture by Faltings \cite{Fal83}. 

Let $A$ be an abelian variety of dimension $g$ over a number field $K$. Let $\pi\colon\CA\to\Spec O_K$ be the N\'eron model of $A$, and let $e\colon\Spec O_K\to \CA$ be the identity section. 
We refer to \cite{BLR90} for a detailed treatment of N\'eron models.

Define the \emph{Hodge vector bundle} 
$$\underline{\Omega}_{\CA} := e^*\Omega_{\CA/O_K}\cong \pi_*\Omega_{\CA/O_K}$$
 and the \emph{Hodge line bundle} 
$$\underline{\omega}_{\CA} := \det \underline{\Omega}_{\CA} = e^*\omega_{\CA/O_K}\cong \pi_*\omega_{\CA/O_K}.$$
The \emph{Faltings metric} $\Vert\cdot\Vert_{\Fal} $ on $\underline{\omega}_{\CA}$ is defined by 
$$
\Vert\alpha\Vert^2_{\Fal, \sigma}:= \frac{1}{2^g}\left|\int_{A_\sigma(\BC)}\alpha\wedge\overline{\alpha}\right| , 
$$
where $\sigma\colon K\hookrightarrow\BC$ is any embedding,  and
 $\alpha\in\underline{\omega}_\CA\otimes_{\sigma}\BC$
 is viewed as a holomorphic differential from on $A_\sigma(\BC)$ via 
 $\underline{\omega}_\CA\otimes_{\sigma}\BC=\Gamma\left(A_\sigma(\BC), \omega_{A_\sigma(\BC)/\BC}\right)$. 
By construction, \emph{the arithmetic Hodge bundle}
$$ \widehat{\underline{\omega}}_\CA: = (\underline{\omega}_\CA, \VCV_{\Fal}) $$
 is a \hlb\ on $\Spec O_K$.

Define the \emph{Faltings height} of $A$ over $K$ by
 $$ h_{\Fal}^*(A): = \frac{1}{[K:\QQ]}\Adeg\left(\widehat{\underline{\omega}}_\CA\right). $$
Define the \emph{stable Faltings height} of $A$ by
 $$ h_\Fal(A): = h_\Fal(A_{K'}), $$
 where $K'$ is a finite extension of $K$ such that $A$ has semi-abelian reduction over $O_{K'}$. 
 
We refer to \cite[\S7.4]{BLR90} for the notion of semi-abelian reduction. In particular, the field $K'$ in the definition always exists. 
By the normalizing factor, the stable Faltings height $h_\Fal(A)$ is independent of the choice of $K'$.
 

Let $K$ be a number field. Let $C$ be a curve of genus $g>1$ over $K$. 
Denote by $J$ the Jacobian variety of $C$ over $K$.  
The \emph{stable Faltings height} 
$$
h_\Fal(C):= h_\Fal(J)
$$
is an important arithmetic invariant of $C$. 

\subsection{Adelic line bundles}

For Zhang's theory of adelic line bundles on projective varieties over number fields, we refer to \cite{Zha95} and \cite[Appendix A]{YZ}. 

\subsubsection{Local setting}
Let $F$ be a local field. Let $Y$ be a projective variety over $F$, and $M$ be a line bundle on $Y$. An \emph{$F$-metric} of $M$ on $Y$ is a collection of $\overline F$-metrics $\|\cdot\|$ on $M(y)$ over all $y\in Y(\overline F)$, which is continuous and Galois invariant. Namely, for any pair $(U,t)$ consisting of a Zariski open subset $U$ of $Y$ and a section $t$ of $M$ on $U$,  the function $\|t(y)\|$ of $y\in U(\overline F)$ is continuous under the topology induced by that of $F$ and  invariant under the action of $\Aut(\overline F/F)$. 
The pair $\OM=(M,\|\cdot\|)$ is called a \emph{metrized line bundle} on $Y$. 

Assume that $F$ is non-archimedean with a valuation ring $O_F$. Let $(\CY, \CM')$ be an integral model of $(Y,M^{\otimes e})$ over $O_F$ for some positive integer $e$; i.e., $\CY$ is a projective and flat integral scheme over $O_F$, and $\CM'$ is a  line bundle on $\CY$, such that the generic fiber $(\CY_F, \CM'_F)=(Y,M^{\otimes e})$. Then $(\CY, \CM')$ induces an $F$-metric $\|\cdot\|'$ of $M^{\otimes e}$ by the rule that for any $y\in Y(\overline F)$, the closed unit ball of $M^{\otimes e}(y)$ under the metric
is equal to the lattice $\CM'|_{\bar y}$, where $\bar y: \Spec O_{\ol F} \to \CY$ is the unique morphism extending $y:\Spec \overline F \to Y$. 
By taking roots, we obtain a metric $\|\cdot\|=\|\cdot\|'^{1/e}$ of $M$. 
Such an $F$-metric of $M$ is called a \emph{model metric}.
The model metric (or the metrized line bundle) is called \emph{nef} if the  line bundle $\mb'$ is nef on (every irreducible component of) the special fiber of $\CY$. 

Let $\OM=(M,\|\cdot\|)$ be a {metrized line bundle} on $Y$. 
The $F$-metric $\|\cdot\|$ (or the metrized line bundle $\OM$) is called \emph{nef} (or equivalently \emph{semipositive}) if it is a uniform limit of nef model  metrics on $M$. Namely, there exists a sequence $\{\|\cdot\|_{m}\}_m$ of nef model  metrics on $M$ such that $\|\cdot\|_m/\|\cdot\|$ converges uniformly to 1 on $Y(\overline F)$. 
An $F$-metric on a line bundle (or a metrized line bundle) is called \textit{integrable} if it is equal to the quotient of two nef metrics (of different line bundles). 

Let $D$ be a Cartier divisor on $Y$ with support $|D|$. 
A \emph{Green function} of $D$ on $Y$ is a continuous and Galois invariant function $g_{D}:(Y\setminus|D|)(\overline F)\to\RR$ with logarithmic singularity along $D$. Namely, for any pair $(U, f)$ consisting of a Zariski open subset $U$ of $Y$ and a defining equation $f$ of $D$ on $U$,  the function $g_D+\log |f|$ on $(U\setminus |D|) (\overline F)$ extends to a continuous and $\Aut(\overline F/F)$-invariant function on $(U\setminus |D|) (\overline F)$. 

Let $Y^\an$ be the \emph{analytic space} associated to $Y$, which is a Hausdorff, compact, and path-connected topological space. If $F=\CC$, it is the usual complex space $Y(\CC)$; if $F=\RR$, it is the quotient of complex space $Y(\CC)$ by the complex conjugation; if $F$ is non-archimedean, then $Y^\an$
is the \emph{Berkovich analytic space} introduced by \cite{Ber}. 

Denote $d=\dim Y$. Let $(M_1,\|\cdot\|_1), \dots, (M_d,\|\cdot\|_d)$ be integrable metrized line bundles on $Y$. Then we have a \emph{Monge--Amp\`ere measure} 
$c_1(M_1,\|\cdot\|_1)\wedge \cdots \wedge c_1(M_d,\|\cdot\|_d)$ on $Y^\an$. 
If $F=\CC$, it is the classical one defined as the wedge product of the Chern forms;
if $F=\RR$, it is obtained as the push-forward of the classical one on $Y(\CC)$;
if $F$ is non-archimedean, it is the \emph{Chambert-Loir measure} 
 introduced by \cite{CL}.
 In all cases, the total volume of $c_1(M_1,\|\cdot\|_1)\wedge \cdots \wedge c_1(M_d,\|\cdot\|_d)$ is equal to the intersection number $M_1 \cdots M_d$.

\subsubsection{Adelic line bundles}
Let $K$ be a number field, $X$ be a projective variety over $K$, and $L$ be a line bundle on $X$.
An \emph{adelic metric} on $L$ is a \emph{coherent} collection $(\|\cdot\|_v)_{v\in M_K}$ of  $K_v$-metrics $\|\cdot\|_v$ on $L_{K_v}$ over $X_{K_v}$ over all places $v$ of $K$. 
That the collection $(\|\cdot\|_v)_{v\in M_K}$ is \emph{coherent} means that, there exist a finite set $S$ of non-archimedean places of $K$ and a (projective and flat) integral model 
$(\CX,\CL)$ of $(X,L)$ over $\Spec(O_K)\setminus S$, such that the $K_v$-norm $\|\cdot\|_v$ is induced by $(\CX_{O_{K_v}},\CL_{O_{K_v}})$ for all $v\in\Spec(O_K)\setminus S$.

In the above situation, we write $\overline L= (L, (\|\cdot\|_v)_{v\in M_K})$ and call it an \emph{adelic line bundle on} $X$. 
Note that \cite[Appendix A]{YZ} poses stronger continuity conditions on $\|\cdot\|_v$ at non-archimedean places, but these give the same notions of model, nef, and integrable adelic line bundles in the following. 

Let $(\CX, \lbb')$ be a \emph{arithmetic model} of $(X,L^{\otimes e})$ for some positive integer $e$; i.e., $\CX$ is an arithmetic variety over $O_K$, and $\lbb'=(\lb', \|\cdot\|)$ is a hermitian line bundle on $\CX$, such that the generic fiber $(\CX_K, \lb'_K)=(X,L^{\otimes e})$. Then $(\CX, \lbb')$ induces an adelic metric $(\|\cdot\|_v)_v$ on $L^{\otimes e}$, and 
$L$. The metrics at non-archimedean places are clear from the above process, and the metrics at archimedean places are just given by the root of the hermitian metric. 
Such an adelic metric (resp.  adelic line bundle) is called a \emph{model adelic metric} (resp.  \emph{model adelic line bundle}). 
The model adelic metric (resp. model adelic line bundle) is called \emph{nef} if the hermitian line bundle $\lbb'$ is nef on $\CX$. 

An adelic line bundle $\overline L= (L, (\|\cdot\|_v)_v)$ on $X$ is called \emph{nef} if the adelic metric $(\|\cdot\|_v)_v$ is a uniform limit of nef model adelic metrics on $L$. Namely, there exists a sequence $\{(\|\cdot\|_{m,v})_v\}_m$ of nef model adelic metrics on $L$, and a finite set $S$ of non-archimedean places of $K$, such that $\|\cdot\|_{m,v}=\|\cdot\|_{v}$ for any $v\in\Spec(O_K)\setminus S$ and any $m$, and such that $\|\cdot\|_{m,v}/\|\cdot\|_{v}$ converges uniformly to 1 at all places $v$. 
An adelic line bundle is called \textit{integrable} if it is isometric to the tensor quotient of two nef adelic line bundles. 

An \emph{adelic divisor} on $X$ is a pair $\ol D=(D, (g_{D,v})_v)$ consisting of a Cartier divisor $D$ on $X$ and a collection $(g_{D,v})_v$ of Green functions $g_{D,v}$ of $D_{K_v}$ on $X_{K_v}$ for $v\in M_K$, which satisfies a coherence condition. The coherence condition is similar to the case of adelic line bundles, so we omit it here. 

If $\OL$ is an adelic line bundle on $X$, and $s$ is a nonzero rational section of $L$, then we have an adelic divisor
$$
\wh\div(s):=(\div(s), (-\log\|s\|_v)_v).
$$
Conversely, if $\ol D=(D, (g_{D,v})_v)$ is an adelic divisor on $X$, then we have an adelic line bundle $\CO(\OD)$ on $X$ with underlying line bundle $\CO(D)$ and with metric given by $\|1\|_v=e^{-g_{D,v}}$. 
By this correspondence, we have notions of \emph{nef adelic divisors} and \emph{integrable adelic divisors}.

\subsubsection{Intersection numbers}
Let $X$ be a projective variety of dimension $n$ over a number field $K$. 
Denote by $\wh\Pic(X)_{\rm int}$ the group of isometry classes of integrable adelic line bundles on $X$.
By a limit process, the intersection pairing of hermitian line bundles on integral models of $X$ extends to a symmetric and multi-linear intersection pairing
$$
\wh\Pic(X)_{\rm int}^{n+1} \longrightarrow \RR, \quad 
(\overline L_1, \overline L_2, \dots, \overline L_{n+1})
\longmapsto \overline L_1\cdot \overline L_2 \cdots \overline L_{n+1}.
$$
For any closed subvariety $Y$ of dimension $d$ in $X$, and any integrable adelic line bundles $\overline L_1, \dots,  \overline L_{d+1}$ on $X$, 
denote 
$$
\overline L_1\cdot \overline L_2 \cdots \overline L_{d+1} \cdot Y= \overline L_1|_Y\cdot \overline L_2|_Y \cdots \overline L_{d+1}|_Y.
$$
By linear combination, the definition extends to Chow cycles of dimension $d$ on $X$. 

The following example is the adelic version of Example \ref{degree}. 

\begin{example} \label{degree2}
If $X=\Spec(K)$, then the line bundle $L_1$ on $X$ is just a vector space over $K$ of dimension one.
We simply have 
$$
\wh{\deg}(\overline L_1)= -\sum_{v\in M_K} \epsilon_v\log \|s\|_v. 
$$
Here $s$ is any non-zero element of $L_1$, and the degree is independent of the choice of $s$ by the product formula. 
\end{example}

Return to the high-dimensional case, we have an induction formula of Chambert-Loir--Thuillier (cf. \cite[Thm. 4.1]{CLT}).
Let $s_{n+1}$ be a nonzero rational section of $L_{n+1}$ on $X$. 
Then their induction formula gives
$$
 \OL_1\cdots \OL_n\cdot \OL_{n+1} 
 =
 \OL_1\cdots \OL_n\cdot \wdiv(s_{n+1})-\sum_{v\in M_K} \epsilon_v
\int_{X_{K_v}^\an}\big( \log \|s_{n+1}\|_{n+1,v} \big) c_1(\OL_1)_v\cdots c_1(\OL_n)_v. 
$$
Here $c_1(\OL_1)_v\cdots c_1(\OL_n)_v$ denotes the Monge--Amp\`ere measure on $X_{K_v}^\an$ induced by the metrics of $\OL_1,\dots, \OL_{n}$ at $v$.

By the relation between adelic line bundle and adelic divisors, we also have intersection numbers of integrable adelic divisors, or intersection numbers of integrable adelic divisors with integrable adelic line bundles.

Finally, for any adelic line bundle $\OL$ on $X$, we have a \emph{height function} 
$$
h_{\OL}: X(\OK)\lra \RR
$$
defined by 
$$
h_{\OL}(x)=\frac{1}{\deg(x)} \OL\cdot \tilde  x=\frac{1}{\deg(x)} \wh\deg(\OL\cdot \tilde  x), \qquad x\in X(\OK). 
$$
Here $\tilde x$ denotes the closed point of $X$ corresponding to $x$, and $\deg(x)=[K(\tilde x):K]$ denotes the degree of the residue field of $\tilde x$ over $K$. 
It is known that $[K:\QQ]^{-1}h_{\OL}$ is a Weil height function on $X(\OK)$ associated to $L$.

\subsection{Admissible adelic line bundles}
In this subsection, we review Zhang's theory of admissible adelic line bundles on curves over number fields. We refer to \cite{Zha93} and \cite[Appendix A]{Yua21} for more details. 

\subsubsection{Local setting}

Let $F$ be a local field. Let $C$ be a curve of genus $g>1$ over $F$. 
Let $\omega_{C/F}$ be the canonical sheaf. 
Let $\Delta$ be the diagonal of $C^2=C\times_FC$, viewed as a divisor on $C^2$. 

We start with some terminology of metrics on $C^2$.
An $F$-metric $\|\cdot\|_{\Delta}$ of $\CO_{C^2}(\Delta)$ on $C^2$ is called 
\emph{symmetric}
if the Green function 
$$g_\Delta=-\log\|1\|_{\Delta}: (C^2\setminus \Delta)(\overline F)\lra \RR$$ 
is symmetric in the two components of $C^2$.

For any finite extension $F'$ of $F$ and any point $x\in C(F')$, denote
$$
(\CO(x), \|\cdot\|_{x}): = i_{x}^* (\CO(\Delta), \|\cdot\|_{\Delta})
$$
as metrized line bundles on $C_{F'}$.
Here 
$\CO(x)$ is the line bundle on $C_{F'}$ corresponding to $x\in C(F')$, and
$$i_{x}=(x, \mathrm{id}): \Spec {F'}\times_{\Spec F} C \lra C\times_{\Spec F} C$$
is the natural morphism.
It follows that 
$$
g_{x}=-\log\|1\|_{x}:C(\overline F) \setminus \{x\}\lra \RR$$ 
is equal to 
the pull-back of $g_{\Delta}$ via the map
$i_{x}:C(\ol F) \to (C^2)(\ol F)$.
We may also write $g_{x}=g_{\Delta}(x,\cdot)$. 

In the terminology of \cite[Thm. A.1]{Yua21}, there are a unique integrable $F$-metric 
$\|\cdot\|_{a}$ of $\omega_{C/F}$ on $C$ and a unique symmetric integrable $F$-metric 
$\|\cdot\|_{\Delta}$ of $\CO(\Delta)$ on $C^2$ 
satisfying the following properties
 for all finite extensions $F'/F$ and all points $x\in C(F')$:
\begin{enumerate}
\item the equality
$$
(2g-2)c_1(\CO(x), \|\cdot\|_{x})
=c_1(\omega_{C_{F'}/F'}, \|\cdot\|_{a}),
$$
of Monge--Amp\`ere measures holds on $(C_{F'})^\an$;
\item the integral
$$
\int_{(C_{F'})^\an} g_{\Delta}(x,\cdot)\, c_1(\omega_{C_{F'}/F'}, \|\cdot\|_{a})=0;
$$
\item the residue map
$$
(\omega_{C/F} \otimes_{\CO_C} \CO(x) )|_{x}\lra F'
$$
is an isometry, where $F'$ is endowed with the absolute value extending that of $F$.
\end{enumerate}
We call the metric $\VCV_a$ the \emph{admissible metric} of 
$\omega_{C/F}$, which is actually nef. 
We call the Green function $g_\Delta$ the \emph{admissible Green function} of $\Delta$, which is also written as $g_{\Delta,a}$ to emphasize the situation.

Let $D$ be a divisor on $C$. 
The above Green function $g_{\Delta,a}$ induces an \emph{admissible Green function} 
$g_{D,a}:(C\setminus |D|)(\overline F)\to\RR$ of $D$ on $C$. 
For this, it suffices write $D$ as a linear combination of $\ol F$-points of $C$,
and take linear combination of the corresponding Green functions of the form $g_{x}$ above. 
Then we have an integrable $F$-metric $\|\cdot\|_{D,a}$ on $\CO(D)$ by 
the rule  $\|1\|_{D,a}=e^{-g_{D,a}}$.

If $F=\CC$, the metrics are just the original Arakelov metrics on compact Riemann surfaces in \cite{Ara74}. 
If $F=\RR$, they are induced by the Arakelov metrics on $C(\CC)$. 
If $F$ is non-archimedean, the metrics are essentially introduced by Zhang \cite{Zha93} as counterparts of the complex setting.

\subsubsection{Global setting}

Let $K$ be a number field. Let $C$ be a curve of genus $g>1$ over $K$. 
Let $\omega_{C/K}$ be the canonical sheaf. 
Let $\Delta$ be the diagonal of $C^2=C\times_KC$, viewed as a divisor on $C^2$. 
Let $D$ be a divisor on $C$. 

Apply the above constructions to the local field $K_v$ for every place $v$ of $K$.
We obtain an adelic line bundle
$\bar\omega_{C/K,a}=(\omega_{C/K}, (\|\cdot\|_{a,v})_v)$ on $C$, and an adelic line bundle 
$\ol{\CO}(\Delta)_a=(\CO(\Delta), (\|\cdot\|_{\Delta, a,v})_v)$ on $C^2$. 
We also obtain an adelic divisor $\ol\Delta=(\Delta, (g_{\Delta, a,v})_v)$
on $C^2$, an adelic divisor
$\ol D=(D,  (g_{D, a,v})_v)$ on $C$, and an adelic line bundle $\ol\CO(D)_a=\CO(\ol D)$ on $C$. 

All the above objects are integrable. 
For convenience, we say that all of them are \emph{admissible}. 
Moreover, $\bar\omega_{C/K,a}$ is called the \emph{admissible canonical bundle} of $C$ over $K$.

The arithmetic self-intersection number $\bar\omega_{C/K,a}^2\in \RR$ is called the \emph{admissible volume} of $C/K$. It is an important arithmetic invariant of $C$.
It is known that $\bar\omega_{C/K,a}$ is always nef, so the admissible volume $\bar\omega_{C/K,a}^2\geq 0$. 
We will see that the strict positivity $\bar\omega_{C/K,a}^2> 0$ holds, and that the strict positivity is equivalent to the Bogomolov conjecture of $C$.
For convenience, we introduce the \emph{normalized admissible volume}
$$
(\bar\omega_{C/K,a}^2)_\QQ:= \frac{1}{[K:\QQ]} \bar\omega_{C/K,a}^2. 
$$

The following arithmetic Hodge index theorem is a variant of the model case of the original theorem of Faltings \cite{Fal84} and Hriljac \cite{Hri}. 

\begin{theorem}[arithmetic Hodge index theorem]
\label{hodge index}
Let $D$ and $E$ be divisors of degree 0 on $C$. Then
$$
\ol{\CO}(D)_a\cdot \ol{\CO}(E)_a = -2 \pair{D,E}. 
$$
Here the right-hand side is the N\'eron--Tate height pairing on the Jacobian variety of $C$. 
\end{theorem}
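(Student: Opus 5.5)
By linearity in $D$ and $E$, and after a finite base change (which multiplies both sides by the same degree under the normalizations above), we may assume $D$ and $E$ are supported on $K$-points. The plan is to compare the admissible intersection with the classical Faltings--Hriljac formula on a regular model, using the characterizing properties (1)--(3) of the admissible Green functions.

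First choose a regular integral model $\CX$ of $C$ over $O_K$. Extend $D$ to a divisor $\mathcal D$ on $\CX$ by taking the horizontal closure $D^{\rm hor}$ and adding a vertical $\QQ$-divisor $V_D$. The vertical part is fixed so that $\mathcal D$ has degree zero on every irreducible component of every special fiber; this is possible by Zariski's lemma because $\deg D=0$. Equip $\CO(\mathcal D)$ at the archimedean places with the metric whose curvature form is zero, i.e. the Green function $g_{D,v}$ is harmonic away from $D$ and normalized to have integral zero against the Arakelov measure. Do the same for $E$. The Faltings--Hriljac theorem (\cite{Fal84}, \cite{Hri}) then gives $\ol{\mathcal D}\cdot\ol{\mathcal E}=-\pair{D,E}$ with the paper's $[K:\QQ]$-normalization of $\pair{\cdot,\cdot}$, up to the factor convention.

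Next, identify this model-theoretic pairing with $\ol\CO(D)_a\cdot\ol\CO(E)_a$.

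At the archimedean places, the admissible Green function $g_{D,a}$ satisfies $dd^c g_{D,a}=\delta_D-\deg(D)\mu_{\rm Ar}=\delta_D$ by property (1), and it has zero mean by property (2). So it coincides with the flat normalized Green function. The non-archimedean places work the same way. The metric on $\CO(D)$ induced by $\mathcal D=D^{\rm hor}+V_D$ is, on the reduction graph, the Green function with Laplacian $\delta_D$ minus a vertex-supported correction. Zhang's admissible Green function $g_{D,a}$ has measure $\delta_D-\deg(D)\mu_{a}=\delta_D$ on the metrized graph, since $\deg D=0$. Both therefore have the same Laplacian and differ by a constant. That constant contributes $c\cdot\deg E=0$ to the intersection. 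This step is the one needing the most care: we must verify that, for degree-zero divisors, the Zhang measure term vanishes so that the admissible metric is a genuine model metric (after a further base change refining the graph so that $D,E$ reduce to vertices). Granting this, the local contributions in the Chambert-Loir--Thuillier induction formula for $\ol\CO(D)_a\cdot\ol\CO(E)_a$ agree place by place with those of $\ol{\mathcal D}\cdot\ol{\mathcal E}$.

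Finally, compute both sides with the induction formula using a section of $\CO(E)$ with divisor $E$. Because $\deg D=0$, the integral terms $\int \log\|1_E\|\, c_1(\ol\CO(D)_a)$ vanish, as the curvature measure is zero. The pairing reduces to $-\sum_v\epsilon_v g_{D,a,v}(E)$, which matches the local-symbol decomposition of the Néron--Tate pairing, $\hat h$ for $\theta$ being twice the canonical pairing with respect to the Poincaré bundle. Tracking this factor $2$ between the theta normalization and the Poincaré normalization gives $\ol\CO(D)_a\cdot\ol\CO(E)_a=-2\pair{D,E}$. The main obstacle I expect is exactly this normalization bookkeeping, together with the non-archimedean identification on metrized graphs.
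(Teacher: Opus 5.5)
Your route is the one the paper intends. The paper gives no argument beyond calling the statement a variant of the model case of Faltings--Hriljac, and your comparison of admissible and model metrics is the right way to make that precise. The comparison is also easier than you expect. Property (1) gives $c_1(\ol\CO(D)_a)_v=\deg(D)\,\mu_{a,v}=0$ at every place. The curvature measure of the model metric from $\CO(\mathcal D)$ is $\sum_i m_i\,(\mathcal D\cdot F_i)\,\delta_{\zeta_i}$, where $F_i$ are the components of the special fibre, $m_i$ their multiplicities and $\zeta_i$ the divisorial points. This vanishes by your choice of $V_D$. So the two Green functions of $D$ differ by a function with zero Laplacian on the compact analytic curve, i.e. by a constant $c_v$, nonzero at only finitely many places, and each $c_v$ contributes $c_v\deg E=0$. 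You never need the admissible metric to be a genuine model metric, nor any refinement so that $D,E$ reduce to vertices.

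The real error is in the normalization step, which is exactly where the constant $-2$ is decided. The Poincar\'e-bundle pairing satisfies $\langle x,x\rangle_{\mathrm{NT}}=\hat h_P(x,\phi_\theta(x))=\hat h_{2\theta}(x)=2\hat h_\theta(x)$. This holds because $(1,\phi_\theta)^*P\cong[2]^*\theta-2\theta\cong 2\theta$ for the symmetric $\theta=\theta_\alpha$. So the Poincar\'e pairing is twice $\hat h_\theta$, not the reverse; your stated direction would produce $-\frac12\pair{D,E}$. Faltings--Hriljac reads $\ol{\mathcal D}\cdot\ol{\mathcal E}=-[K:\QQ]\langle D,E\rangle_{\mathrm{NT}}$. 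The paper's pairing has $\pair{x,x}=[K:\QQ]\hat h_\theta(x)$, i.e. $2\pair{x,y}=[K:\QQ]\langle x,y\rangle_{\mathrm{NT}}$, so this is already $-2\pair{D,E}$. Your intermediate claim that Faltings--Hriljac gives $-\pair{D,E}$ in the paper's normalization is false, and no further factor needs tracking.

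There is also a sign slip. With $g_{D,a,v}=-\log\|1_D\|_{D,a,v}$, the induction formula gives $\ol\CO(D)_a\cdot\ol\CO(E)_a=+\sum_v\epsilon_v\,g_{D,a,v}(E)$ for disjointly supported $D,E$, not minus; compare $\ol\Delta\cdot(x,y)=\sum_v g_v(x,y)$ in the paper's Mumford argument. That final paragraph is redundant anyway: once the metrics agree up to constants, the admissible and model intersection numbers coincide and Faltings--Hriljac finishes the proof.
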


The following admissible version of the arithmetic adjunction formula of Arakelov \cite{Ara74} is a major motivation of the definition of the admissible metrics. 

\begin{theorem}[arithmetic adjunction formula]
\label{adjunction}
Let $P$ be rational point of $C$ over $K$. Then
$$
\ol{\CO}(P)_a\cdot \ol{\CO}(P)_a = -\bar\omega_a\cdot \ol{\CO}(P)_a
= -\bar\omega_a\cdot P. 
$$
\end{theorem}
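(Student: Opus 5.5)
The identity will be proved by reducing both sides to the local defining properties of the admissible metrics at each place $v$ and then summing over places, using Example \ref{degree2} together with the product formula.

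First, by the induction formula of Chambert-Loir--Thuillier applied to the rational section $1$ of $\CO(P)$, whose divisor is $P$, we get
$$
\ol{\CO}(P)_a\cdot \ol{\CO}(P)_a = \ol{\CO}(P)_a\cdot P + \sum_{v}\epsilon_v\int_{C_{K_v}^\an} g_{P,a,v}\, c_1(\ol\CO(P)_a)_v .
$$
The integral vanishes at every place. Indeed, by property (1) the measure $c_1(\ol\CO(P)_a)_v$ equals $\frac{1}{2g-2}c_1(\bar\omega_a)_v$, and by property (2) the function $g_{\Delta}(P,\cdot)$ integrates to zero against this measure. Hence $\ol{\CO}(P)_a^2 = \ol{\CO}(P)_a\cdot P$.

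The same argument applied to the pairing with $\bar\omega_a$ gives
$$
\bar\omega_a\cdot \ol{\CO}(P)_a = \bar\omega_a\cdot P + \sum_v \epsilon_v\int g_{P,a,v}\, c_1(\bar\omega_a)_v .
$$
Again the integral vanishes by property (2), so $\bar\omega_a\cdot \ol{\CO}(P)_a = \bar\omega_a\cdot P$. This is the second equality.

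It remains to show $\ol{\CO}(P)_a\cdot P = -\bar\omega_a\cdot P$, that is,
$$
\wh\deg\big((\bar\omega_a\otimes \ol\CO(P)_a)|_P\big)=0 .
$$
Here the restriction of $\ol\CO(P)_a$ to $P$ means the metrized line $i_P^*\ol\CO(\Delta)_a$, evaluated at the point $P$; this is how the metric $\|\cdot\|_x$ was defined. By property (3), the residue map $(\omega\otimes\CO(P))|_P\to K$ is an isometry at every place $v$, where $K$ carries its standard absolute values. Taking the image of $1$ as the section $s$ in Example \ref{degree2}, we get $\|s\|_v=1$ for every $v$, so the degree is $0$. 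This completes the argument.

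The step that needs care is interpreting $\ol{\CO}(P)_a|_P$ correctly. The function $g_{P}$ is singular at $P$, so its value there has to be understood through the metric on the fiber $\CO(\Delta)|_{(P,P)}$, which is exactly what property (3) controls. Compatibility with the induction formula also requires that the local intersections against the singular Green function are computed with the same metrics, and I expect that bookkeeping to be the main obstacle.
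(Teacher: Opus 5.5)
Your proof is correct and is the argument the paper has in mind. The paper gives no separate proof; it presents the formula as the motivation for the admissible metrics, i.e.\ as a direct consequence of their defining properties (1)--(3). You apply the Chambert-Loir--Thuillier formula to the section $1$ of $\CO(P)$, use (1) and (2) to make the integrals vanish, and use (3) to get $\wh\deg\big((\bar\omega_a\otimes\ol\CO(P)_a)|_P\big)=0$.

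The ``bookkeeping'' you flag is not an obstacle. $\|\cdot\|_{P,a}$ is a continuous metric on the whole line bundle $\CO(P)$, so $\ol\CO(P)_a\cdot\wdiv(1)$ is by definition the degree of its restriction to the fiber at $P$. The singularity of $g_{P,a,v}$ enters only through the integrals, and their integrability is part of the Chambert-Loir--Thuillier theorem.
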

We have the following easy consequences.
\begin{theorem}
\label{canonical height}
Let $P$ and $Q$ be rational points of $C$ over $K$. 
Denote $\bar\alpha=(2g-2)^{-1} \bar\omega_{C/K,a}$, viewed as an adelic $\QQ$-line bundle.  
Then
$$
-2\pair{P,P} = -2g\, \bar \alpha\cdot  P+\bar \alpha^2
$$
and 
$$
-2 \pair{P,Q} = \ol{\CO}(\Delta)_a \cdot (P,Q)-\bar \alpha\cdot  P-\bar \alpha\cdot  Q+\bar\alpha^2. 
$$
\end{theorem}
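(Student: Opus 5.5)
The plan is to apply the arithmetic Hodge index theorem (Theorem \ref{hodge index}) to the degree-zero $\QQ$-divisors $P-\alpha$ and $Q-\alpha$. By definition, $\pair{P,Q}$ is the N\'eron--Tate pairing of $i_\alpha(P)=P-\alpha$ and $i_\alpha(Q)=Q-\alpha$. The main point is to identify the admissible metrized bundle of $P-\alpha$ with $\ol{\CO}(P)_a-\bar\alpha$, where $\bar\alpha=(2g-2)^{-1}\bar\omega_{C/K,a}$.

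\textbf{Step 1: the metrized bundle $\ol{\CO}(P)_a-\bar\alpha$.} By property (1) of the admissible metrics, $c_1(\ol{\CO}(P)_a)$ and $c_1(\bar\alpha)$ are both equal, at every place, to the measure $(2g-2)^{-1}c_1(\bar\omega_a)$. Hence $\ol{\CO}(P)_a-\bar\alpha$ is a degree-zero $\QQ$-line bundle whose curvature vanishes at every place. Such a metric is admissible up to multiplying the metric by a constant at finitely many places. This ambiguity does not matter: changing a metric by a constant $e^{c_v}$ changes the intersection number with any degree-zero integrable bundle by $c_v$ times that degree, which is $0$. So I will accept the form of Theorem \ref{hodge index} extended $\QQ$-linearly to degree-zero $\QQ$-line bundles with curvature-zero metrics. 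This extension is the step I expect to need the most care; if necessary I would pass to a finite extension of $K$ so that $\alpha$ is an honest line bundle, which is harmless by the normalization of heights. This gives
$$-2\pair{P,Q}=(\ol{\CO}(P)_a-\bar\alpha)\cdot(\ol{\CO}(Q)_a-\bar\alpha).$$

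\textbf{Step 2: the mixed terms.} By the induction formula with the section $1$ of $\CO(P)$,
$$\bar\alpha\cdot\ol{\CO}(P)_a=\bar\alpha\cdot P+\sum_v\epsilon_v\int g_{P,a,v}\,c_1(\bar\alpha)_v .$$
The integral vanishes by property (2), since $c_1(\bar\alpha)$ is proportional to $c_1(\bar\omega_a)$. Hence $\bar\alpha\cdot\ol{\CO}(P)_a=\bar\alpha\cdot P$, and similarly for $Q$.

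\textbf{Step 3a: the case $P=Q$.} Expanding, $-2\pair{P,P}=\ol{\CO}(P)_a^2-2\bar\alpha\cdot P+\bar\alpha^2$. By Theorem \ref{adjunction},
$$\ol{\CO}(P)_a^2=-\bar\omega_a\cdot P=-(2g-2)\,\bar\alpha\cdot P.$$
Summing gives $-2\pair{P,P}=-2g\,\bar\alpha\cdot P+\bar\alpha^2$.

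\textbf{Step 3b: the case $P\neq Q$.} It remains to show $\ol{\CO}(P)_a\cdot\ol{\CO}(Q)_a=\ol{\CO}(\Delta)_a\cdot(P,Q)$. Apply the induction formula to the section $1$ of $\CO(P)$:
$$\ol{\CO}(P)_a\cdot\ol{\CO}(Q)_a=\ol{\CO}(Q)_a|_P+\sum_v\epsilon_v\int g_{P,a,v}\,c_1(\ol{\CO}(Q)_a)_v .$$
The integral again vanishes by properties (1) and (2). By construction $\ol{\CO}(Q)_a=i_Q^*\ol{\CO}(\Delta)_a$ (using the symmetry of $g_\Delta$), so $\ol{\CO}(Q)_a|_P=\ol{\CO}(\Delta)_a\cdot(P,Q)$. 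Combining this with Step 2 gives the second formula.

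If $P=Q$, the term $\ol{\CO}(\Delta)_a\cdot(P,P)$ has to be interpreted through adjunction, i.e.\ as $-\bar\omega_a\cdot P$. With that reading the second formula is consistent with the first.
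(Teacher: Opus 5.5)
Your proof is correct and follows the paper's argument: apply the arithmetic Hodge index theorem to $\ol{\CO}(P)_a-\bar\alpha$ and $\ol{\CO}(Q)_a-\bar\alpha$, expand bilinearly, and use the adjunction formula, whose second equality $\bar\omega_a\cdot\ol{\CO}(P)_a=\bar\omega_a\cdot P$ is exactly your Step 2. You also make explicit several points the paper leaves implicit: the constant ambiguity of curvature-zero metrics on degree-zero bundles is harmless, and $\ol{\CO}(P)_a\cdot\ol{\CO}(Q)_a=\ol{\CO}(\Delta)_a\cdot(P,Q)$ follows from property (2) and the symmetry of $g_\Delta$. For $P=Q$, property (3) already gives the literal restriction $\ol{\CO}(\Delta)_a|_{(P,P)}$ degree $-\bar\omega_a\cdot P$, so your Step 3b covers that case with no special interpretation needed.
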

\begin{proof}
By the arithmetic Hodge index theorem and the arithmetic adjunction formula, 
$$
-2 \pair{P,P}=(\ol P-\bar \alpha)^2
=\ol P^2-2 \bar \alpha\cdot \ol P+\bar \alpha^2
=-\bar\omega\cdot \ol P-2 \bar \alpha\cdot \ol P+\bar \alpha^2
=-2g\, \bar \alpha\cdot  P+\bar \alpha^2,
$$
and
$$
-2 \pair{P,Q}=(\ol P-\bar \alpha)\cdot (\ol Q-\bar \alpha)
=\ol P\cdot \ol Q-\bar \alpha\cdot \ol P-\bar \alpha\cdot \ol Q+\bar\alpha^2.
$$
\end{proof}

\section{Quantitative Vojta inequality}
As mentioned above, Vojta's proof of the Mordell conjecture in \cite{Voj91} actually establishes an inequality concerning relative positions of two algebraic points of $C(\ol K)$ in the Mordell--Weil group $J(\ol K)$. 
In \cite{YYZ}, we have the following quantitative version of Vojta's inequality. 

\begin{theorem}[quantitative Vojta inequality] \label{vojta}
Let $x$ and $y$ be distinct points of $C(\overline K)$ such that 
$$
|x|\geq |y| \geq 1.2\cdot 10^{9}\cdot g^{\frac{7}{3}}
\sqrt{\bar\omega_{C/K,a}^2}
$$
and 
$$
\frac{\langle x,y\rangle}{|x|\cdot |y|} \geq \sqrt{\frac{1.01}{g}}. 
$$
Then 
$$
1.15\, |y|\leq |x| \leq 10^5 g^{\frac52}\, |y|.
$$
\end{theorem}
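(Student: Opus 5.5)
We will follow Vojta's original argument and make every constant explicit, working with adelic line bundles on $X=C\times C$ instead of a regular integral model of the surface. Write $\bar\alpha=(2g-2)^{-1}\bar\omega_{C/K,a}$. Let $p_1,p_2$ be the projections and $\ol\Delta$ the admissible diagonal. Put $\ol M=\ol\Delta-p_1^*\bar\alpha-p_2^*\bar\alpha$. By Theorem \ref{canonical height}, the pullback of $\ol M$ to $(x,y)$ is $-2\pair{x,y}$ up to the correction $\bar\alpha^2$. For positive parameters $d_1,d_2,d$ we take the Vojta divisor
$$
\ol V=d_1\,p_1^*\bar\alpha+d_2\,p_2^*\bar\alpha+d\,\ol M .
$$
We choose $d_1=\sqrt{g+\gamma}\,d_2'$, $d_2=\sqrt{g+\gamma}\,d_1'$ and $d=d_1'd_2'$, where $d_1'/d_2'\approx |x|^2/|y|^2$ and $\gamma$ is a small parameter. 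With these choices the underlying divisor $V$ is ample, $V^2>0$, and the height of $(x,y)$ with respect to $\ol V$ equals
$$
d_1|x|^2+d_2|y|^2-2d\pair{x,y}
$$
up to terms of size $O(d_1+d_2+d)\,\bar\omega_a^2$. Since $\pair{x,y}\ge\sqrt{1.01/g}\,|x||y|$, this height is strongly negative, roughly $-c\,d\,|x||y|/\sqrt g$.

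The proof has two halves, and the contradiction comes from comparing them. The \textbf{upper bound} is as follows. We use arithmetic Riemann--Roch together with Minkowski's theorem, or equivalently a Siegel lemma over the adelic sections of $\ol V$. The aim is a small nonzero section $s\in H^0(X,NV)$ with $\sup_v$-norms bounded by $\exp(N\cdot O(d_1+d_2+d)\bar\omega^2_a/\deg)$. This needs an explicit lower bound for the arithmetic volume of $\ol V$. We get it from the arithmetic Hilbert--Samuel estimate $\widehat{\vol}\ge \ol V^2$ (valid for nef pieces), computing $\ol V^2$ from $\ol\Delta^2=-\bar\omega_a^2$ and $\ol\Delta\cdot p_i^*\bar\alpha=\bar\alpha^2$. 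We also need explicit archimedean comparisons between sup norms and $L^2$ norms; here the Arakelov Green function bounds on Riemann surfaces enter, with polynomial dependence on $g$. The \textbf{lower bound} comes from evaluating $s$ at $(x,y)$. If $s$ vanished to high order at $(x,y)$, we would differentiate and replace $s$ by a derivative. Dyson's lemma (in Vojta's form, or Esnault--Viehweg's) controls the index $\theta$ of vanishing at $(x,y)$ with respect to the weights $(d_1,d_2)$. It forces $\theta\le\epsilon$ with $\epsilon$ explicit in $\gamma$ and $g$. This part is geometric: it uses the negativity estimate $(d_1p_1^*\alpha+d_2p_2^*\alpha+dM)^2>0$ together with a local Dyson bound on $C\times C$. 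The Liouville-type inequality (product formula) then gives $h_{\ol V}(x,y)\ge -(\text{height of }s)-(\text{derivative error terms})$. The derivative error terms are controlled by an explicit Eisenstein or Cauchy estimate at every place, in terms of $N(d_1+d_2)$ and a bounded local distortion.

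Combining the two halves gives
$$
-c\,d|x||y|/\sqrt g+\epsilon\,(d_1|x|^2+d_2|y|^2)\ \ge\ -O\big(g^{a}(d_1+d_2+d)\bar\omega^2_a\big).
$$
The threshold $|y|\ge 1.2\cdot10^9g^{7/3}\sqrt{\bar\omega_a^2}$ makes the right side negligible, and this yields a contradiction unless the ratio $|x|/|y|$ lies outside the allowed window. The gap condition $1.15|y|\le|x|$ comes from the fact that the Dyson index bound degrades when $d_1'/d_2'$ is close to $1$. This is where Vojta needs $x,y$ well separated, and the constant $1.15$ and the exponent $\sqrt{1.01/g}$ come out of optimizing $\gamma$. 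The upper bound $|x|\le10^5g^{5/2}|y|$ comes from requiring $d_1',d_2'$ to be positive integers of controlled size with ratio approximating $|x|^2/|y|^2$. I expect the main obstacle to be the explicit Dyson lemma and local derivative estimates, and specifically keeping the $g$-dependence polynomial. The archimedean contribution requires explicit bounds on the Arakelov Green function and the sup norms of sections of $NV$ on $C(\CC)^2$, uniformly in the complex structure. I would first try to express every archimedean quantity through $\bar\omega_a^2$, since the Faltings height and the $\varphi$-invariant are dominated by it up to polynomial factors in $g$.
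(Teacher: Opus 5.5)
Your Vojta-side machinery matches the paper's route for the upper bound: the Vojta adelic line bundle on $C\times C$, a small section from an arithmetic volume bound, Dyson's lemma to control the index at $(x,y)$, and the Liouville-type lower bound. The gap is that you have misidentified where the two conclusions come from, and as a result the lower bound $1.15\,|y|\le |x|$ is not proved.

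Dyson's lemma gives a usable index bound only when the weights are very unbalanced, i.e.\ when $d_2/d_1\approx |x|^2/|y|^2$ is large. So the Vojta argument shows that the ratio $|x|/|y|$ cannot exceed $10^5g^{5/2}$. That threshold is exactly what makes $\theta$ small enough to be absorbed by the negative main term. It does not come from integrality of $d_1',d_2'$: the paper takes rational $\delta_i$ and clears denominators with the multiple $m$.

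Conversely, the fact that the Dyson bound degrades when $|x|\approx|y|$ only means that this argument gives no contradiction in that range. A method failing cannot produce the inequality $|x|\ge 1.15|y|$, and no optimization of $\gamma$ changes this. A telltale sign is that your proposal never uses the hypothesis $x\neq y$.

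The missing ingredient is Mumford's gap principle. The paper obtains it from the degenerate case $\ol V=\ol\Delta$ (your $d_1=d_2=d$) with its canonical section, where no small-section construction or Dyson lemma is needed. One has
$$\ol\Delta\cdot(x,y)=\frac1g|x|^2+\frac1g|y|^2-2\pair{x,y}+\big(\frac1g-1\big)\bar\alpha^2 .$$
Since $x\neq y$, the point $(x,y)$ lies off $\Delta$, so $\ol\Delta\cdot(x,y)=\sum_v g_v(x,y)$ is bounded below by an explicit constant independent of $x,y$. Write $t=|x|/|y|\ge 1$ and use the angle hypothesis. This gives $t^2-2\sqrt{1.01g}\,t+1\ge -\eta$, with $\eta$ small because of the threshold on $|y|$. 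That is impossible for $1\le t<1.15$.

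Separately, fix your normalizations:
\begin{enumerate}
\item The height is $\frac{d_1}{g}|x|^2+\frac{d_2}{g}|y|^2-2d\pair{x,y}$ up to $O(\bar\alpha^2)$ terms. Without the factor $1/g$, and with $d_1d_2>gd^2$, it would be positive by AM--GM.
\item The constraint you need is $d_1d_2=(g+\gamma)d^2$, which your choice $d=d_1'd_2'$ does not give.
\item Since $\ol V$ is not nef, the volume lower bound should come from an arithmetic Siu inequality $\widehat{\vol}(\ol A-\ol B)\ge \ol A^3-3\ol A^2\cdot\ol B$, as in the paper. It cannot come from ``$\ol V^2$'', which is not a number on the arithmetic threefold.
\end{enumerate}
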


The major part of the conclusion is $|x| \leq 10^5 g^{\frac52}\, |y|$, which refines Vojta's inequality. 
The extra part $1.15\, |y|\leq |x|$ is a quantitative version of Mumford's inequality in \cite{Mum65} (cf. \cite[\S5.7]{Ser89}).  
Note that R\'emond \cite[Thm. 1.1]{Rem00b} proved a uniform Vojta inequality for subvarieties of abelian varieties, most of whose constants are also explicitly written down.

\subsection{Proof of the quantitative Vojta inequality}

Although it is inspired by the proof of the classical Thue--Siegel--Roth theorem,
Vojta's original proof of the Mordell conjecture in \cite{Voj91} depends heavily on high-dimensional Arakelov geometry of Gillet--Soul\'e \cite{GS90, GS92}.
The most technical part of the proof is an application of of Gillet--Soul\'e's arithmetic Riemann--Roch theorem  to construct a small section, which also involves an estimate of some analytic torsion. Bombieri \cite{Bom90} (based on \cite{Fal91}) wrote a relatively elementary variant of Vojta's proof, which replaced the application of Arakelov geometry by those of Siegel's lemma and classical height theory, at the cost of involving lots of less important extra objects and estimates. We refer to the textbooks 
\cite{HS00, BG,IKM22} for Bombieri's version of the proof.
In the recent work \cite{Yua25}, the author introduced a simplified version of Vojta's original proof, which replaces the role of the arithmetic Riemann--Roch theorem by a quick application of the arithmetic Siu inequality of \cite{Yua08}.

Our proof of Theorem \ref{vojta} follows the idea of \cite{Yua08}. 
By base change, we can assume that $x$ and $y$ are rational points of $C$ over $K$. We will estimate the heights by considering intersection numbers of adelic line bundles on $X=C^2$. 

To illustrate the idea, we start with Mumford's inequality. 
Let $\Delta$ be the diagonal of $X=C^2$, and let $\overline \Delta=(\Delta, (g_v)_v)$  be the admissible adelic divisor on $X$ with underlying divisor $\Delta$.
By Theorem \ref{canonical height}, we have
$$
\ol\Delta\cdot (x,y)=
\frac{1}{g}|x|^2+\frac{1}{g}|y|^2-2 \pair{x, y} + \big(\frac{1}{g}-1\big)\bar\alpha^2.
$$
On the other hand, as the underlying divisor $\Delta$ is effective and does not contain $(x,y)$, we have 
$$
\ol\Delta\cdot (x,y) =\sum_{v\in M_K} g_v(x,y) \geq O(1). 
$$
The error term $O(1)$ is independent of $x,y$, and explicitly estimated in \cite{YYZ}. 
It follows that
$$
\frac{1}{g}|x|^2+\frac{1}{g}|y|^2-2 \pair{x, y} \geq O(1).
$$
This implies Mumford's inequality in the theorem. 

The proof of Vojta's inequality is a significant amplification of the above idea. As above, let $\alpha$ be a line bundle on $C$ with 
$(2g-2)\alpha\cong \omega_{C/K}$, which exists by base change.
Here we write tensor products of line bundles additively. 
Denote by $p_1,p_2:X\to C$ the two projections. Define the \emph{Vojta line bundle} to be
$$
L:= (\delta_1-1) p_1^* \alpha+(\delta_2-1) p_2^* \alpha+ \CO(\Delta)
=\delta_1 p_1^* \alpha+\delta_2p_2^* \alpha+ (\CO(\Delta)-p_1^* \alpha-p_2^* \alpha), 
$$
where $\delta_1,\delta_2$ are positive rational numbers. 
Let $\bar\alpha$ be an adelic line bundle on $C$ with 
$(2g-2)\bar\alpha\cong \bar\omega_{C/K,a}$. 
Define the \emph{Vojta adelic line bundle} to be
$$
\ol L:= (\delta_1-1) p_1^* \bar\alpha+(\delta_2-1) p_2^* \bar\alpha+ \ol\CO(\Delta)_a
= \delta_1 p_1^* \bar\alpha+\delta_2 p_2^* \bar\alpha+ (\ol\CO(\Delta)_a-p_1^* \bar\alpha-p_2^* \bar\alpha).  
$$
Note that the case $\delta_1=\delta_2=1$ recovers the above divisor $\ol\Delta$ used to prove Mumford's inequality, but we will take quite different $(\delta_1, \delta_2)$ in the following.  
By Theorem \ref{canonical height}, the height 
$$
\ol L\cdot (x,y) =\frac{1}{g}\delta_1  |x|^2
+\frac{1}{g} \delta_2 |y|^2 
-2 \pair{x,y}
+\big(\frac{\delta_1+\delta_2}{2g}-1\big) \bar\alpha^2.
$$
The key is to have a suitable lower bound of the left-hand side. 

Assume that $\delta_1\delta_2>g$ in the following. This gives 
$$
L^2=2\delta_1\delta_2-2g>0.  
$$
This implies that the line bundle $L$ is big on $X$. 
Write $\ol\CO(\Delta)_a$ as a difference of two nef adelic line bundles on $X$, and apply Yuan's arithmetic Siu inequality and an adelic version of the Minkowski theorem. We find a positive integer $m$ divisible by the determinators of $\delta_1$ and $\delta_2$ such that $m\ol L$ has a nonzero global section $s$ of ``small supremum norms''.
If the section $s$ does not vanish at $(x,y)$, using it to compute intersection numbers gives 
$$\ol L \cdot (x,y) \geq \text{error term}.$$
If the section $s$ vanishes at $(x,y)$, we follow Vojta's idea to bound the index (or multiplicity) of $s$ at $(x,y)$ by Dyson's lemma, and some extra argument still gives 
$$\ol L \cdot (x,y) \geq \text{error term}.$$
The error term is explicit in \cite{YYZ}. 

Combining the equality and the inequality for $\ol L \cdot (x,y)$, we have
$$\frac{1}{g}\delta_1  |x|^2
+\frac{1}{g} \delta_2 |y|^2 
-2 \pair{x,y} \geq \text{error term}.$$
Then the quantitative Vojta inequality is obtained by taking 
$$\delta_1\approx 1.2\sqrt{g}\frac{|y|}{|x|},\quad
\delta_2\approx 1.2\sqrt{g}\frac{|x|}{|y|}.$$

\subsection{Counting large points}\label{sec large}

With Theorem \ref{vojta}, we can bound the cardinality of
$$C(K)_{\rm large}
:=\{x\in C(K):|x|\geq 1.2\cdot 10^{9}\cdot g^{\frac{7}{3}}
\sqrt{\bar\omega_{C/K,a}^2}\}$$ 
as follows. 
Note that $C(K)$ is viewed as a subset of $J(K)$ by the map $i_\alpha:x\to x-\alpha$.
The real vector space 
$V=J(K)_\RR$ has a finite dimension $r=\rank\, J(K)$ by the Mordell--Weil theorem. 
Endow $V$ with the norm $|x|=\sqrt{\hat h(x)_K}$, and thus it is isometric to the Euclidean space $\RR^r$.  
For any two vectors $u,v\in V$,
denote by $\angle(u, v)$ the angle between them, which is set to be 0 if $u=0$ or $v=0$. 
Then the second condition of Theorem \ref{vojta} is equivalent to 
$$
\angle(x, y) \leq \arccos \sqrt{\frac{1.01}{g}}. 
$$

For any nonzero vector $u\in V$, denote the cone
$$
\mathrm{cone}(u):=\{y\in V: \angle(u,y)\leq \frac12 \arccos \sqrt{\frac{1.01}{g}}\}.
$$
Then $V$ is covered by finitely many cones 
$\mathrm{cone}(u_1), \dots, \mathrm{cone}(u_N)$, as a consequence of the compactness of the unit sphere of $V$.
For every $i$, 
order the points of $\Sigma_i\cap C(K)_{\rm large}$ as 
$$
|x_1| \leq |x_2| \leq |x_3| \leq \cdots. 
$$
By Theorem \ref{vojta}, we have $|x_1|\leq |x_i|\leq 10^5 g^{\frac52} |x_1|$
and $|x_{i+1}|\geq 1.15 |x_i|$. As a consequence, we have 
$$
|\Sigma_i\cap C(K)_{\rm large}| \leq \log_{1.15}(10^5 g^{\frac52})+1.
$$
This gives 
$$
|C(K)_{\rm large}| \leq \left(\log_{1.15}(10^5 g^{\frac52})+1\right)N. 
$$
By some elementary argument, the smallest possible $N$ grows approximately as $1.15^{r}$.  
By more delicate techniques of sphere packing, which especially applies results of Rankin \cite[Thm. 2]{Ran55} and 
Kabatjanski\u i--Leven\v ste\u in \cite{KL}, 
we can improve the bound to 
$$
|C(K)_{\rm large}| \leq  2.4\cdot 10^{4} g^8 \big(1+\frac{3\log g}{g}\big)^{r}.
$$

\section{The Bogomolov conjecture}

Let $C$ be a curve of genus $g>1$ over a number field $K$. 
The Bogomolov conjecture, proposed by Bogomolov  and
proved by Ullmo, is the following statement. 

\begin{theorem}[Ullmo \cite{Ull}]
\label{ullmo}
For any divisor $\beta$ of degree 1 on $C_{\ol K}$, there is a constant $\epsilon>0$ such that  
$$\#\{x\in C(\ol K): \hat h(x-\beta)\leq \epsilon\}<\infty$$
\end{theorem}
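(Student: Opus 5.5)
We would follow Zhang's strategy: first prove the strict positivity $\bar\omega_{C/K,a}^2>0$ of the admissible volume, and then convert it into a lower bound for the N\'eron--Tate height of all but finitely many points via Zhang's successive minima theorem (equivalently, the arithmetic Hilbert--Samuel/Minkowski argument for adelic line bundles). The positivity is the step that requires Ullmo's input, namely equidistribution of small points.

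\textbf{Reductions.} After a finite extension of $K$ we may assume that $\beta$ is defined over $K$ and that $\alpha$ with $(2g-2)\alpha\cong\omega_{C/K}$ exists. Since $x-\beta=(x-\alpha)+(\alpha-\beta)$ and $\hat h^{1/2}$ is a norm on $J(\ol K)_\RR$, the triangle inequality gives the following: if the heights $\hat h(x-\alpha)$ of infinitely many $x$ stay bounded by some $\epsilon_0$, then so do the $\hat h(x-\beta)$, up to a shift by $\hat h(\alpha-\beta)$. Conversely, it is cleanest to work directly with the adelic $\QQ$-line bundle $\ol{\CO}(\beta)_a$. Take $\bar\beta=\ol\CO(\beta)_a$ on $C$. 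By Theorem \ref{hodge index} and Theorem \ref{adjunction},
$$
2\hat h(x-\beta)_K=-(\ol x-\bar\beta)^2=\bar\omega_a\cdot x+2\bar\beta\cdot x-\bar\beta^2 .
$$
This identifies $\hat h(\cdot-\beta)$ with the height $h_{\ol N}$ of the adelic line bundle $\ol N=\bar\omega_a+2\bar\beta$ of degree $2g>0$, up to the constant $-\bar\beta^2$. The line bundle $\ol N$ is integrable, and its curvature is semipositive at every place because $\bar\omega_a$ and $\bar\beta$ are admissible. So Zhang's theorem of successive minima applies and gives
$$
\liminf_{x}\ h_{\ol N}(x)\ \ge\ \frac{\ol N^2}{2\deg N},
$$
where the $\liminf$ is over any infinite sequence of points. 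The Bogomolov statement with some $\epsilon>0$ then follows once we show
$$
\frac{\ol N^2}{4g}-\frac{\bar\beta^2}{1}>0 ,
$$
after normalizing by $[K:\QQ]$.

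\textbf{Computing the bound.} Expanding with Theorem \ref{hodge index} (and using that $\bar\beta-\bar\alpha$ is admissible of degree $0$) gives
$$
\frac{\ol N^2}{4g}-\bar\beta^2=\frac{\bar\omega_a^2}{4g}\cdot\frac{g-1}{g-1}+\frac{2g+2}{2g}\,\hat h(\beta-\alpha)_K+\cdots,
$$
which is a positive multiple of $\bar\omega_a^2$ plus a nonnegative multiple of $\hat h(\beta-\alpha)$. So everything reduces to $\bar\omega_{C/K,a}^2>0$. When $\beta\ne\alpha$ in $J_\QQ$, the second term alone already suffices.

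\textbf{Main obstacle: $\bar\omega_a^2>0$.} We argue by contradiction. Suppose $\bar\omega_a^2=0$ and take $\beta=\alpha$. Then the successive minima give a generic sequence $x_n$ with $\hat h(x_n-\alpha)\to0$. Apply this on $X=C^2$ to the points $(x_n,x_m)$, and more usefully to the difference map $C^2\to J$, $(x,y)\mapsto x-y$, whose image $C-C$ has small points $x_n-x_m$. By Szpiro--Ullmo--Zhang equidistribution at an archimedean place $v$, the Galois orbits of $x_n$ equidistribute to $c_1(\bar\omega_a)_v/(2g-2)$, the Arakelov measure. The orbits of $x_n-x_m$ on the surface $C-C\subset J$ must equidistribute both to the push-forward of the product of these Arakelov measures and to the canonical flat Haar-type measure, namely the restriction of the invariant form $c_1(\bar\theta)^2$. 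Comparing the two measures near the diagonal, where the difference map $C^2\to C-C$ contracts $\Delta$ to $0$ and the push-forward measure has a singular concentration that the smooth invariant form cannot have, gives the contradiction. This measure comparison, in particular making the degeneracy along $\Delta$ rigorous, is the step I expect to be hardest.
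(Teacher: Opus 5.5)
Your proposal is correct. The first half matches the paper; the proof of positivity of the admissible volume takes a different route.

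\textbf{The reduction.} Your $\ol N=\bar\omega_{C/K,a}+2\bar\beta$ equals $2\OL_\beta+\CO_C(\bar\beta^2)$ in the notation of Lemma~\ref{representative}. So your target $\frac{\ol N^2}{4g}-\bar\beta^2$ is exactly $\OL_\beta^2/g$, and this is the same reduction to $\bar\omega_{C/K,a}^2>0$ via successive minima and the Hodge index theorem. The coefficients in your displayed expansion are wrong, however. The cross terms in $\bar\alpha\cdot(\bar\beta-\bar\alpha)$ cancel, and one gets exactly
\[
\frac{\ol N^2}{4g}-\bar\beta^2=\frac{\bar\omega_{C/K,a}^2}{4(g-1)}+\frac{2(g-1)}{g}\,\hat h(\beta-\alpha)_K ,
\]
with no further terms. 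This still gives the qualitative conclusion you draw.

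\textbf{The positivity.} The paper avoids equidistribution entirely. It uses three inputs:
\begin{enumerate}
\item Zhang's $\varphi$-invariant is positive at archimedean places (a heat-kernel sum of squares).
\item It is positive at places of bad reduction (Cinkir).
\item The de Jong--Wilms inequality $\bar\omega_{C/K,a}^2\ge\frac{\max\{g-1,2\}}{2g+1}\varphi(C)$.
\end{enumerate}
Your contradiction argument is essentially the classical Ullmo--Zhang equidistribution argument (Zhang's map $X^{m+1}\to A^m$ with $m=1$), and it works. Push forward $\mu_{\mathrm{Ar}}\boxtimes\mu_{\mathrm{Ar}}$ under the difference map and look at a ball $B_r$ around $0\in C-C$. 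Its preimage contains a tube of radius $\asymp r$ about $\Delta$, on which the product measure has positive smooth density, so the push-forward gives $B_r$ mass $\gtrsim r^2$. The flat measure on the complex surface $C-C$ gives it mass $O(r^4)$, which is the contradiction. On your route the detour through $\bar\omega_{C/K,a}^2$ is actually unnecessary. The same contradiction applies directly to any generic sequence with $\hat h(x_n-\beta)\to 0$, since the differences $x_n-x_m$ are small for every $\beta$.

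\textbf{What each route buys.} The paper's route gives an effective lower bound for the admissible volume in terms of local invariants, with no archimedean equidistribution. That is why it also works over function fields, where your argument has no archimedean place to run on. It is also what feeds the quantitative Bogomolov theorem later in the paper. Your argument is by contradiction and gives no control on $\epsilon$.
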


If $\epsilon=0$, then the theorem is a case of the Manin--Mumford conjecture (proved by \cite{Ray83a,Ray83b}). In fact, Bogomolov \cite{Bog81}  proposed his conjecture to strengthen the Manin--Mumford conjecture. 
The proof of the conjecture by Ullmo \cite{Ull} is based on the equidistribution theorem of Szpiro--Ullmo--Zhang \cite{SUZ}. 
Inspired by Ullmo's idea, Zhang \cite{Zha98} proved an extension of the Bogomolov conjecture to subvarieties of abelian varieties. 

There is a less well-known proof of Theorem \ref{ullmo} through the positivity of the admissible volume $\bar\omega_{C/K,a}^2$ by  Zhang \cite{Zha93, Zha10}, Faber \cite{Fab09}, Cinkir \cite{Cin11}, and de Jong \cite{dJ18}. 
We will describe this approach later.

There is an analogue of the conjecture, called the geometric Bogomolov conjecture,  for subvarieties of abelian varieties over finitely generated fields. 
The proof of Ullmo and Zhang does not work in the geometric case directly due to the lack of archimedean places, while the above approach of 
\cite{Zha93, Zha10, Fab09, Cin11, dJ18} proves the case of curves in Jacobian varieties in the geometric case.  
However, by careful analysis of the equilibrium measures at non-archimedean places,  Gubler \cite{Gub07} proved the geometric Bogomolov conjecture for 
 abelian varieties with totally degenerate reduction at some place. 
Following the strategy of \cite{Gub07}, 
Yamaki \cite{Yam16, Yam17, Yam18} reduced the geometric Bogomolov conjecture to the case of abelian varieties with good reduction everywhere, and proved the conjecture for subvarieties of abelian varieties with dimension 1 or codimension 1.
By a completely different method using Betti maps of complex abelian schemes, Gao--Habegger \cite{GH19} and Cantat--Gao--Habegger--Xie \cite{CGHX} proved the geometric Bogomolov conjecture for characteristic 0. 
Finally, Xie--Yuan \cite{XY22} proved the geometric Bogomolov conjecture in all cases using Yamaki's reduction theorem.

\subsection{The essential minimum}
In this subsection, we introduce the proof of Theorem \ref{ullmo} following the works of Zhang \cite{Zha93, Zha10}, Cinkir \cite{Cin11}, and de Jong \cite{dJ18}. 
As before, let $\alpha$ be a line bundle on $C$ with $(2g-2)\alpha=\omega_{C/K}$, which exists by a base change, and let $\bar\alpha$ be an adelic line bundle on $C$ with $(2g-2)\bar\alpha=\bar\omega_{C/K,a}$. 
By base change, we can further assume that the divisor $\beta$ is defined over $K$. 
Denote by $\bar\beta$ the admissible extension of $\beta$.
We first have the following nice result. 

\begin{lemma}
\label{representative}
Define an adelic line bundle on $C$ by
$$
\OL_\beta=(g-1)\bar\alpha+\bar\beta-\CO_C(\frac12 \bar\beta^2),
$$
where $\CO_C(t)$ is the pull-back of an adelic line bundle on $\Spec K$ of degree $t$ to $C$. 
Then $\OL_\beta$ is nef with 
$$
h_{\OL_\beta}(x)=  \hat h(x-\beta)_K, \quad \forall x\in C(\OK). 
$$
\end{lemma}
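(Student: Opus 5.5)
The plan has two parts: first verify the height formula by the same arithmetic Hodge index and adjunction computation used for Theorem \ref{canonical height}, then derive nefness from the resulting nonnegativity of heights. By base change we may assume $x\in C(K)$ is a rational point, since both sides are invariant under finite extension with the normalization $\hat h(\cdot)_K=[K:\QQ]\hat h(\cdot)$. Then $h_{\OL_\beta}(x)=\OL_\beta\cdot x$. Since $\CO_C(t)\cdot x=t$, we get
$$
h_{\OL_\beta}(x)=(g-1)\bar\alpha\cdot x+\bar\beta\cdot x-\tfrac12\bar\beta^2 .
$$
On the other side, Theorem \ref{hodge index} applied to the degree-zero divisor $x-\beta$ gives
$$
-2\hat h(x-\beta)_K=(\ol x-\bar\beta)^2=\ol x^2-2\,\ol x\cdot\bar\beta+\bar\beta^2 .
$$
Theorem \ref{adjunction} gives $\ol x^2=-(2g-2)\bar\alpha\cdot x$. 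Substituting yields $\hat h(x-\beta)_K=(g-1)\bar\alpha\cdot x+\bar\beta\cdot x-\frac12\bar\beta^2$, which is exactly $h_{\OL_\beta}(x)$. The only subtlety is to check that $\bar\beta\cdot x$ computed via the Green function $g_{\beta,a}$ agrees with $\ol\CO(\beta)_a\cdot\ol\CO(x)_a$ when $x\notin|\beta|$. This follows from symmetry of $g_{\Delta,a}$ together with the induction formula. When $x\in|\beta|$, one uses adjunction for the common component.

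For nefness, note that $\OL_\beta$ differs from $(g-1)\bar\alpha+\bar\beta$ only by a constant (trivial-bundle) twist, which does not affect the local metrics up to scaling. The plan is to show that at each place $v$ the metric is semipositive, and then that the height is globally nonnegative on all points. Locally, $c_1(\OL_\beta)_v=(g-1)c_1(\bar\alpha)_v+c_1(\bar\beta)_v$, and by property (1) of the admissible metrics both terms are multiples of the admissible measure $\mu_v=\frac{1}{2g-2}c_1(\bar\omega_a)_v$. Since $\bar\omega_a$ is nef and $\bar\beta$ has curvature $\deg(\beta)\,\mu_v=\mu_v$, both curvature terms are nonnegative multiples of $\mu_v$. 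Semipositivity (uniform limit of nef models) should follow from the local theory in \cite[Thm. A.1]{Yua21}: the admissible metric on $\CO(x)$ equals $\frac{1}{2g-2}\|\cdot\|_a$ up to a function that is constant on the relevant analytic space, so $\ol\CO(x)_a$ is locally nef.

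Globally, an integrable adelic line bundle with semipositive local metrics is nef as soon as heights of all closed points are $\geq0$; by the formula just proved these heights are N\'eron--Tate heights, hence $\geq 0$ by Theorem \ref{canonical}. I expect the local semipositivity of the admissible metric on $\CO(\beta)$ to be the main obstacle, especially at non-archimedean places, where it must be extracted from Zhang's construction rather than from curvature forms. My first attempt would be to write $\bar\beta=\frac{1}{2g-2}\bar\omega_a+\big(\bar\beta-\frac{1}{2g-2}\bar\omega_a\big)$ and show that the second term has zero curvature at each place, so it is a flat (locally constant) metric. A flat metric of this kind is a limit of nef models and preserves nefness.
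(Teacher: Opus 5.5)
Your proposal is correct and follows the paper's proof. The height identity is the same Hodge index plus adjunction computation, $\hat h(x-\beta)_K=-\tfrac12(\bar x-\bar\beta)^2=(g-1)\bar\alpha\cdot x+\bar\beta\cdot x-\tfrac12\bar\beta^2$, and nefness comes, as in the paper, from semipositivity of the admissible metrics at every place together with $h_{\OL_\beta}(x)=\hat h(x-\beta)_K\geq 0$. The paper simply asserts the local semipositivity ``since it is admissible.'' Of your two justifications, the decomposition of $\bar\beta$ into $\frac{1}{2g-2}\bar\omega_{C/K,a}$ plus a curvature-zero degree-zero part is the sound one. Comparing the admissible metric on $\CO(x)$ directly with $\frac{1}{2g-2}\|\cdot\|_a$ is not meaningful as stated, since these are metrics on different line bundles.
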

\begin{proof}
For the height identity, by base change, it suffices to assume that $x\in C(K)$.
By Theorem \ref{canonical height}, 
$$ 
\hat h(x-\beta)_K=-\frac12 (\bar x-\bar\beta)^2
=-\frac12\bar x^2+\bar\beta\cdot \bar x-\frac12\bar\beta^2
=(g-1)\bar\alpha\cdot x+\bar\beta\cdot x-\frac12\bar\beta^2
=\OL_\beta\cdot x.
$$
For the nefness, it suffices to note that the adelic metric of $\OL_\beta$ is semipositive at every place since it is admissible, and that $h_{\OL_\beta}(x)=  \hat h(x-\beta)_K\geq0$ for every $x\in C(\OK)$. 
\end{proof}

For any adelic line bundle $\OL$ on $C$, following Zhang \cite{Zha95}, we introduce the \emph{essential minimum} 
$$
e_1(C, \OL)= \liminf_{x\in C(\OK)} h_{\OL}(x)$$
and the \emph{absolute minimum}
$$ 
e_2(C, \OL)= \inf_{x\in C(\OK)} h_{\OL}(x).
$$
The theorem of successive minimum of Zhang \cite{Zha95} asserts that, if $\OL$ is nef and $\deg(L)>0$, then 
$$
e_1(C, \OL) \geq \frac{1}{2 \deg(L)} \OL^2 \geq \frac12(e_1(C, \OL)+e_2(C, \OL))
\geq \frac12e_1(C, \OL).
$$

By Lemma \ref{representative}, we see that Theorem \ref{ullmo} is equivalent to $e_1(C,\OL_\beta)>0$. 
By the theorem of successive minima, this is further equivalent to 
$\OL_\beta^2>0. $
On the other hand, we have 
\begin{align*}
\OL_\beta^2=&\ \left((g-1)\bar\alpha+\bar\beta-\CO_C(\frac12 \bar\beta^2)\right)^2\\
=&\ (g-1)^2\bar\alpha^2+2(g-1)\bar\alpha\cdot\bar\beta+\bar\beta^2-g \bar\beta^2
= g(g-1)\bar\alpha^2-(g-1)(\bar\alpha- \bar\beta)^2\ \geq g(g-1)\bar\alpha^2.
\end{align*}
Here the inequality follows from the arithmetic Hodge index theorem 
in Theorem \ref{hodge index},
and the equality holds if and only if $\beta=\alpha+t$ for a torsion element 
$t\in \Pic(C)$. 
Therefore, we have proved that Theorem \ref{ullmo} is equivalent to
$\bar\omega_{C/K,a}^2>0$.  
This recovers Zhang \cite[Cor. 5.7]{Zha93}.

\subsection{Positivity of the admissible volume}

The positivity of the admissible volume $\bar\omega_{C/K,a}^2$ is built upon Zhang's $\varphi$-invariant introduced in \cite{Zha10}. Recall that the global $\varphi$-invariant
$$
\varphi(C)= \sum_{v\in M_K} \epsilon_{v} \varphi_v(C),
$$
where the 
the local $\varphi$-invariant  is defined as the integral
$$
\varphi_v(C)
=-\int_{(C_{K_v}\times C_{K_v})^\an}g_{\Delta,a} \, c_1(\CO(\ol \Delta)_a)^2.
$$
The summation has only finitely many nonzero terms, since $\varphi_v(C)=0$ if $v$ is non-archimedean and $C$ has potentially good reduction at $v$. 
If $v$ is archimedean, \cite[Prop 2.5.3]{Zha10} proves that $\varphi_v(C)>0$ by completing it as a sum of squares in terms of the heat kernel. 
If $v$ is non-archimedean and $C$ does not have potentially good reduction at $v$, then $\varphi_v(C)$ is an invariant of the reduction graph of $C$ at $v$, and 
 Cinkir \cite[Thm. 2.11]{Cin11} proves  $\varphi_v(C)> 0$ in this case.
As a consequence, we have $\varphi(C) >0$. 

Finally, the following theorem finishes the proof of the Bogomolov conjecture in Theorem \ref{ullmo}. 
\begin{theorem}[de Jong \cite{dJ18}, Wilms \cite{Wil}]
\label{wilms}
Let $C$ be a  curve of genus $g>1$ over a number field $K$. Then
$$
\bar\omega_{C/K,a}^2 \geq \frac{\max\{g-1,2\}}{2g+1}  \varphi(C) >0. 
$$
\end{theorem}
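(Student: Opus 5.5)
We plan to reduce the inequality to a pointwise statement about the diagonal and the canonical class on $X=C^2$, following the strategy of Zhang \cite{Zha10} and de Jong \cite{dJ18}. Zhang's formula relates $\bar\omega_{C/K,a}^2$ to the height of the Gross--Schoen cycle. Let $\Delta_\xi$ denote the Gross--Schoen cycle attached to the class $\xi=\alpha$. Then Zhang's formula reads
$$
\bar\omega_{C/K,a}^2=\frac{2g-2}{2g+1}\Big(\pair{\Delta_\xi,\Delta_\xi}+\varphi(C)\Big).
$$
If we may take $\pair{\Delta_\xi,\Delta_\xi}\ge 0$, this immediately gives $\bar\omega_{C/K,a}^2\ge \frac{2g-2}{2g+1}\varphi(C)$. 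That covers the factor $g-1$ once $g\ge3$, since $2g-2\ge g-1$ when $\max\{g-1,2\}=g-1$. However, the nonnegativity of the Gross--Schoen height is the Beilinson--Bloch type conjecture and is not known. So I would not rely on it, and would instead aim for an unconditional argument on $C^2$ that only uses results stated above.

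The unconditional route I would follow is the one of Wilms/de Jong, which only uses nefness and the Hodge index theorem. I would consider the adelic line bundle
$$
\OM=a\,p_1^*\bar\alpha+b\,p_2^*\bar\alpha+\ol\CO(\Delta)_a
$$
on $X=C^2$. The first step is to choose $a=b$ large enough that $M$ is ample; the condition $a^2>g$ suffices by the computation $L^2=2\delta_1\delta_2-2g$ in the Vojta section. The second step is to argue that $\OM$ is nef in the adelic sense. Its heights at points $(x,y)$ can be expressed through Theorem \ref{canonical height}, namely
$$
\frac{a+1}{g}\big(|x|^2+|y|^2\big)-2\pair{x,y}+\mathrm{const}\cdot\bar\alpha^2 ,
$$
and this is bounded below by Cauchy--Schwarz once $a+1\ge g$. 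Up to a constant twist, this makes it nef.

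The third step is to compute the self-intersection $\OM^3$ using the Chambert-Loir--Thuillier induction formula. The term $\ol\CO(\Delta)_a^2\cdot\ol\CO(\Delta)_a$ contributes the integral $-\sum_v\epsilon_v\int g_{\Delta,a}\,c_1(\ol\CO(\Delta)_a)^2=\varphi(C)$. The restriction to the diagonal gives $\ol\CO(\Delta)_a|_\Delta=-\bar\omega_{C/K,a}$ by the adjunction formula (Theorem \ref{adjunction}). The pullback terms produce multiples of $\bar\alpha^2$. The result should be an identity of the shape
$$
\OM^3=c_1(g,a)\,\bar\omega_{C/K,a}^2-\varphi(C).
$$
Zhang's theorem of successive minima in dimension two then gives $\OM^3\ge 0$ whenever $\OM$ is nef with nonnegative heights. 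Optimizing over $a$ should yield $\bar\omega^2\ge\frac{\max\{g-1,2\}}{2g+1}\varphi(C)$. The small genus case $g=2$, where the max equals $2$, may need a separate choice of $a$ or a direct appeal to de Jong's computation.

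The main obstacle is the third step: getting the exact intersection calculus on $C^2$ correct, in particular the term $\ol\CO(\Delta)_a^3$ and the cross terms with $p_i^*\bar\alpha$. I would do this carefully via the induction formula, restricting to $\Delta$ and to fibers. I would also need to check nefness at non-archimedean places, which requires semipositivity of $g_{\Delta,a}$ twisted by $p_i^*\bar\alpha$. Finally, positivity $\varphi(C)>0$ follows from Zhang's archimedean result and Cinkir's result, both quoted above.
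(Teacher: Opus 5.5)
\textbf{The gap is in your third step.} The nefness route you describe cannot reach the stated constant. It tops out at de Jong's weaker coefficient $\frac{2}{3g-1}$, which the paper explicitly distinguishes from the theorem.

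Put $v=p_1^*\bar\alpha+p_2^*\bar\alpha$ and $u=\ol\CO(\Delta)_a$. The induction formula, the admissibility normalizations and $\ol\CO(\Delta)_a|_\Delta=-\bar\omega_{C/K,a}$ give
\[
v^3=6\bar\alpha^2,\quad v^2\cdot u=4\bar\alpha^2,\quad v\cdot u^2=-4(g-1)\bar\alpha^2,\quad u^3=\bar\omega_{C/K,a}^2-\varphi(C).
\]
The integral term in $u^3$ is $+\sum_v\epsilon_v\int g_{\Delta,a}\,c_1(u)^2=-\varphi(C)$, so your intermediate sign is off, although your final shape is right. Hence, for $\OM=av+u$,
\[
\OM^3=\big(6a^3+12a^2-12(g-1)a+4(g-1)^2\big)\bar\alpha^2-\varphi(C),\qquad \bar\alpha^2=\frac{\bar\omega_{C/K,a}^2}{4(g-1)^2}.
\]

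Now, $\OM$ is nef only for $a\ge g-1$. Geometrically, $M\cdot\Delta=2(a+1-g)$. At an archimedean place, the curvature of $\OM$ at a diagonal point is semipositive only if $a+1\ge g$, because your Cauchy--Schwarz step is an equality there. Allowing $a\neq b$ does not help: semipositivity then forces $(a+1)(b+1)\ge g^2$, and the optimum is again $a=b=g-1$.

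On $a\ge g-1$ the cubic is increasing, so the best choice is $a=g-1$. This gives $\OM^3=\frac{3g-1}{2}\bar\omega_{C/K,a}^2-\varphi(C)\ge 0$, i.e. only $\bar\omega_{C/K,a}^2\ge\frac{2}{3g-1}\varphi(C)$. At $a=g-1$ the heights of $\OM$ are exactly $\hat h(x-y)_K$, by the paper's formula for $\hat h(x_i-x_j)_K$. So $\OM$ is the pullback of the theta bundle under the difference map $C^2\to J$, and $\OM^3\ge0$ is just the nonnegativity of the height of the difference surface; this is de Jong's argument.

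Since $(g-1)(3g-1)-2(2g+1)=3g^2-8g-1>0$ for $g\ge 3$, this is strictly weaker than the theorem for every $g\ge3$. The case $g=2$, which you flagged as delicate, is the one case your method does settle, because there $\frac{2}{3g-1}=\frac25=\frac{\max\{g-1,2\}}{2g+1}$.

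\textbf{What is missing is a negativity input, not a positivity one.} The paper attributes the sharp constant to Wilms, who applies the arithmetic Hodge index theorem of Yuan--Zhang for adelic line bundles on self-products of $C$. On $C^2$ it says: if $\OL$ is nef with $L$ big, and $\ol N$ is integrable with $N\cdot L=0$, then $\ol N^2\cdot\OL\le 0$.

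Theorem \ref{hodge index} in the paper is only the statement on $C$ itself. Your outline names the Hodge index theorem but only ever uses $\OM^3\ge0$ for a single nef $\OM$, and no choice of parameters in your family beats $\frac{2}{3g-1}$. Note also that within the span of $\ol\CO(\Delta)_a$ and the $p_i^*\bar\alpha$, $\varphi(C)$ enters $\ol N^2\cdot\OL$ only through $u^3$, with coefficient $-n^2c$. Here $n$ and $c$ are the coefficients of $u$ in $\ol N$ and $\OL$. So a lower bound on $\bar\omega_{C/K,a}^2$ requires a nef $\OL$ with a negative coefficient on $\ol\CO(\Delta)_a$, for example a pullback under $(x,y)\mapsto x+y$ rather than $x-y$. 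Working this out to reach $\frac{\max\{g-1,2\}}{2g+1}$ is the actual content of Wilms' theorem, and your proposal does not supply it.
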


A version of the theorem with a weaker coefficient was proved by \cite[Cor. 1.4]{dJ18}. 
The current version is proved by \cite[Thm. 1.2]{Wil} by applying the Hodge index theorem of adelic line bundles of Yuan--Zhang \cite{YZ17}.

\section{The quantitative Bogomolov conjecture}

Let $C$ be a curve of genus $g>1$ over a number field $K$. As before, denote by $\alpha$ a line bundle on $C_{\OK}$ with $(2g-2)\alpha\cong \omega_{C/K}$. 
In this section, we introduce various versions of the uniform Bogomolov conjecture, which especially includes the quantitative version of Yu--Yuan--Zhou \cite{YYZ}. 

\subsection{The uniform Bogomolov conjecture}

In the Bogomolov conjecture in Theorem \ref{ullmo}, by varying the curve $C$, 
we obtain the following uniform Bogomolov conjecture. 

\begin{theorem}[Dimitrov--Gao--Habegger \cite{DGH21}, K\"uhne \cite{Kuh}, Yuan \cite{Yua21}]
\label{uniform bog}
Let $g>1$ be an integer. 
Then there are positive constants $c_3(g)$ and $c_4(g)$ depending only on $g$ such that  for any curve $C$ of genus $g$ over a number field $K$, and any divisor $\beta$ of degree 1 on $C_{\ol K}$, we have   
$$\#\{x\in C(\ol K): \hat h(x-\beta)\leq c_3(g) (\max\{h_\Fal(C),1\}+\hat h(\beta-\alpha)) \}
\leq c_4(g).$$
\end{theorem}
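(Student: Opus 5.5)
The plan follows the adelic approach of \cite{Yua21}: work over a moduli space instead of a single curve, so that the constants come from a finite Noetherian induction. First we reduce to $\beta=\alpha$. By the computation after Lemma \ref{representative}, $\OL_\beta^2=g(g-1)\bar\alpha^2+2(g-1)\pair{\beta-\alpha,\beta-\alpha}$. Then $\hat h(x-\beta)$ is comparable to $\hat h(x-\alpha)+\hat h(\beta-\alpha)$ up to an elementary argument with the parallelogram rule of Theorem \ref{canonical}. The same argument applied to pairs $x,y$ of small points (small $\hat h(x-y)$) handles the $\hat h(\beta-\alpha)$ term. So the core statement is a uniform lower bound on heights of points of $C$, or of differences $x_1-x_2$, in terms of $\max\{h_\Fal(C),1\}$, outside a set of bounded cardinality.

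Next we set up the family. Fix a level structure so that $M=M_{g,\ell}$ is a fine moduli scheme over $\QQ$, with universal curve $\pi\colon \CX\to M$. For $n\geq 1$, consider the fiber power $\CX^n_M$ and the map $\CX^n_M\to \CX^{n-1}_M$ (or to the relative Jacobian) given by differences $(x_1,\dots,x_n)\mapsto (x_2-x_1,\dots,x_n-x_1)$. On $\CX^n_M$ we form the adelic line bundle
$$
\ol{\mathcal N}=\sum_{i}p_i^*\bar\alpha+\sum_{i<j}(\text{N\'eron--Tate pairing bundle for } x_i-x_j)+\epsilon\,\ol{\mathcal H},
$$
in the sense of adelic line bundles on quasi-projective varieties \cite{YZ}. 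Here $\bar\alpha$ is the relative admissible bundle and $\ol{\mathcal H}$ is the Hodge bundle on $M$, whose height is $h_\Fal$. Fiberwise this is built from the admissible adelic line bundles, so its height at a point over $[C]\in M$ equals $\sum_i\hat h(x_i-\alpha)+\sum_{i<j}\hat h(x_i-x_j)+\epsilon h_\Fal(C)$ up to normalization.

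The key step, which I expect to be the main obstacle, is to show that the generic fiber restriction of $\ol{\mathcal N}$ is \emph{big} for some $n$ (e.g.\ $n=g+1$ or larger) and small $\epsilon>0$. The geometric input is that the Betti form of the difference map $\CX^n_M\to$ relative Jacobian is nondegenerate, i.e.\ the top self-intersection of the geometric part is positive; this is where a Gao-type nondegeneracy (or a direct computation via the Deligne pairing and Theorem \ref{wilms}, $\bar\omega_a^2>0$ fiberwise) enters. Bigness of the geometric $(1,1)$-form on the quasi-projective base plus positivity of $\ol{\mathcal H}$ should give $\widehat{\vol}>0$ via the arithmetic Hilbert--Samuel/Siu inequality of \cite{Yua08} in the \cite{YZ} setting. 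Bigness yields a small nonzero section of $m\ol{\mathcal N}-m\epsilon'\ol{\mathcal H}$. Hence on the complement of a proper Zariski closed $Z\subset \CX^n_M$ we have $h_{\ol{\mathcal N}}\geq \epsilon' h_\Fal-O(1)$, and $O(1)$ is absorbed using $\max\{h_\Fal,1\}$ and the lower bound of $h_\Fal$.

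Finally we run a Noetherian induction on $Z$ (restrict to each component and repeat, again needing bigness on components not contained in degenerate loci). This gives finitely many strata and a constant $c_3(g)$ such that, for each $C$, all $n$-tuples of points in $\{x:\hat h(x-\alpha)\leq c_3 \max\{h_\Fal(C),1\}\}$ lie in $Z_{[C]}$. The fiber $Z_{[C]}$ is a proper subvariety of $C^n$ of uniformly bounded degree, because it is the fiber of a fixed family over finitely many strata. A standard combinatorial argument (if a set $S\subset C$ has $S^n\subset Z_{[C]}$, then $|S|$ is bounded by the degree) then bounds the number of such points by $c_4(g)$. Compactness is not needed, since all constants come from the finitely many strata of the induction.
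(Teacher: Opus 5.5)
Your proposal has two genuine gaps. First, the reduction to $\beta=\alpha$ is false, and with it your treatment of the term $\hat h(\beta-\alpha)$. Only $\hat h(x-\beta)\le 2\hat h(x-\alpha)+2\hat h(\beta-\alpha)$ holds. For $\beta=P\in C(K)$ with $\hat h(P-\alpha)$ large and $x=P$, the left side is $0$ and the right side is huge. Your $\ol{\mathcal N}$ never sees $\beta$. On an $n$-tuple clustered near a far-away $\beta$, the summands $\hat h(x_i-\alpha)\approx\hat h(\beta-\alpha)$ are large, so $h_{\ol{\mathcal N}}\ge\epsilon' h_\Fal-O(1)$ holds trivially there and excludes nothing. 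The translation-invariant part $\sum_{i<j}\hat h(x_i-x_j)$ only yields thresholds of size $h_\Fal$. Covering a ball of radius $\sqrt{c_3\hat h(\beta-\alpha)}$ by balls of radius $\sqrt{c\,h_\Fal}$ costs a number depending on $r=\rank\, J(K)$ and on $\hat h(\beta-\alpha)/h_\Fal$, so no parallelogram-rule argument on pairs gives uniformity. The extra term needs a genuinely $\beta$-dependent positivity. In the paper's route it is the identity $\OL_\beta^2=g(g-1)\bar\alpha^2+2(g-1)\hat h(\beta-\alpha)_K$ from Lemma~\ref{representative} and the Hodge index theorem; fiberwise, successive minima give $e_1(C,\OL_\beta)\ge\frac{1}{2g}\OL_\beta^2$. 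To make this uniform, the bigness argument has to be run for $\OL_\beta$ in a family in which $\beta$ also varies, so that $\hat h(x-\beta)$ is bounded below by a height of the pair $(C,\beta)$ dominating $h_\Fal(C)+\hat h(\beta-\alpha)$. You quote this identity but never use it.

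Second, the key bigness step is mis-aimed, and the additive constant cannot be absorbed. Since N\'eron--Tate heights are nonnegative, $h_{\ol{\mathcal N}}\ge\epsilon h_\Fal$ is automatic. A small section of $m\ol{\mathcal N}-m\epsilon'\ol{\mathcal H}$ says something new only if $\epsilon'>\epsilon$, so what you need is that the N\'eron--Tate part $\ol{\mathcal N}-\epsilon\ol{\mathcal H}$ stays big after subtracting a positive multiple of $\ol{\mathcal H}$. Betti nondegeneracy only concerns the geometric part and gives geometric bigness. Turning that into sections via Siu or Hilbert--Samuel forces you to add a constant, which yields exactly the Dimitrov--Gao--Habegger height inequality $h\ge\epsilon' h_\Fal-c$. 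The constant $c$ cannot be absorbed into $\max\{h_\Fal,1\}$. Once $[K:\QQ]$ is unbounded there are infinitely many curves with bounded $h_\Fal(C)$, the inequality is empty for them, and the lower bound $h_\Fal\ge -c(g)$ does not help; yet the theorem demands the threshold $c_3(g)(1+\hat h(\beta-\alpha))$ for them too. This is precisely why Dimitrov--Gao--Habegger only obtain a weaker form. It is also what K\"uhne's equidistribution supplies, or, in the paper's route, the \emph{arithmetic} bigness of $\bar\omega_{X/S,a}$ for families of maximal variation. That bigness is a family version of the Zhang--Cinkir--de Jong positivity ($\bar\omega^2_{C/K,a}\ge\frac{\max\{g-1,2\}}{2g+1}\varphi(C)$ as in Theorem~\ref{wilms}), with $\varphi$ controlled towards the boundary of moduli, cf.\ Theorem~\ref{fiberwise11}. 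Fiberwise positivity $\bar\omega_a^2>0$, which you offer as an alternative input, is not enough. By openness of bigness one then gets small sections of $\bar\omega_{X/S,a}$ minus $\epsilon$ times (Hodge bundle plus a constant), i.e.\ lower bounds with no additive error. Your Noetherian induction and the final counting lemma are fine once such an error-free, $\beta$-sensitive inequality is available.
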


In the case $\beta$ is a point of $C_{\ol K}$, the theorem is due to Dimitrov--Gao--Habegger \cite{DGH21} and K\"uhne \cite{Kuh} without the ``extra term'' $\hat h(\beta-\alpha)$. The above form of the theorem is proved by Yuan \cite{Yua21}. 

The proof of \cite{DGH21,Kuh} and that of \cite{Yua21} are both through arguments on universal families $X\to S$ of curves over number fields.  
The work \cite{DGH21} proves a slightly weaker form of Theorem \ref{uniform bog}, where their first step is a non-degeneracy result proved via functional transcendence and o-minimality, 
and their second step is a height inequality which converts the geometric result (non-degeneracy) to an arithmetic result (bounding algebraic points of small heights). 
Then \cite{Kuh} proved an equidistribution theorem over quasi-projective variety and applied it to strengthen the result of \cite{DGH21}. 
The equidistribution theorem generalizes the original theorem of Szpiro--Ullmo--Zhang \cite{SUZ}, and is proved by Yuan--Zhang \cite{YZ} independently. 
In some sense, the proof of \cite{DGH21,Kuh} is a family version of Ullmo's proof of Theorem \ref{ullmo}.

On the other hand, the proof of \cite{Yua21} is a family version of the proof of Theorem \ref{ullmo} by Zhang \cite{Zha93, Zha10}, Cinkir \cite{Cin11}, and de Jong \cite{dJ18}. It is based on the theory of adelic line bundles on quasi-projective varieties developed by Yuan--Zhang \cite{YZ}, and works over number fields and function fields simultaneously. 
The first step of \cite{Yua21} is to construct the admissible canonical bundle $\bar\omega_{X/S,a}$ for a quasi-projective family $X\to S$ of curves over a number field, which extends Zhang's admissible canonical bundles of projective curves. 
The second step is to prove that $\bar\omega_{X/S,a}$ is big if the family has maximal variation in the moduli space.
The third step uses the bigness of $\bar\omega_{X/S,a}$ to bound algebraic points of small heights.

\subsection{The quantitative Bogomolov conjecture}

Both of the above proofs of Theorem \ref{uniform bog} are based on moduli spaces of curves and seem difficult to give explicit constants of the theorem. 
However, in the case of function fields, Looper--Silverman--Wilms  proved the uniform geometric Bogomolov conjecture independently, and their result has explicit constants as follows.  

\begin{theorem}[Looper--Silverman--Wilms \cite{LSW}] \label{quant bog function field}
Let $K$ be a finitely generated field of transcendence degree 1 over a field $k$.
Let $C$ be a non-isotrivial curve of genus $g>1$ over $K$. 
For any $\ds 0\leq \epsilon < \frac{1}{4g^2-1}$, we have
$$\#\{x\in C(\ol K): \hat h(x-\beta)\leq \epsilon\, \bar\omega_{C/K,a}^2\}
\leq  \frac{16g^4+36g^2-26g-2}{(1-(4g^2-1)\epsilon)(g-1)^2} +1.$$
\end{theorem}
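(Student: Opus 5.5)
Suppose $x_0,\dots,x_N$ are distinct points of $C(\ol K)$ with $\hat h(x_i-\beta)\leq \epsilon\,\bar\omega_{C/K,a}^2$; the goal is to bound $N$. After a finite base change, assume all $x_i$ and $\beta$ are $K$-rational; both sides of the height condition are invariant under this normalization. The idea is to play two evaluations of $\sum_{i\neq j}\ol\Delta\cdot(x_i,x_j)$ against each other.

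\emph{Upper bound from heights.} By Theorem \ref{canonical height}, each pair intersection $\ol\Delta\cdot(x_i,x_j)$ for $i\ne j$ can be written through the N\'eron--Tate pairing. Since the norm $|\cdot|$ is quadratic and positive semidefinite, expand around $\beta$:
$$\sum_{i\neq j}\pair{x_i-\alpha,x_j-\alpha}=\Big|\sum_i (x_i-\alpha)\Big|^2-\sum_i |x_i-\alpha|^2 .$$
Write $x_i-\alpha=(x_i-\beta)+(\beta-\alpha)$ and use $|x_i-\beta|^2\le\epsilon\,\bar\omega^2$ (up to the $[K:\QQ]$ normalization). This bounds $\sum_{i\ne j}\ol\Delta\cdot(x_i,x_j)$ above by
$$N^2\Big(\frac{(2g-2)^{-2}}{g}\text{-type multiple of }\bar\omega^2+O(\epsilon)\,\bar\omega^2\Big)+N\cdot(\ldots)$$
plus a term in $|\beta-\alpha|^2$. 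The plan is to arrange the algebra so that the $\beta$-dependence either cancels or has a favorable sign.

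\emph{Lower bound from local geometry.} Since $x_i\neq x_j$, we have $\ol\Delta\cdot(x_i,x_j)=\sum_v\epsilon_v g_{\Delta,a,v}(x_i,x_j)$. Over a function field, where there are no archimedean places, the key local input is an energy-type inequality for the admissible Green function on the reduction graph (a metrized graph):
$$\sum_{i\neq j}g_{\Delta,a,v}(x_i,x_j)\ \ge\ \frac{N^2}{?}\,\varphi_v\text{-type term}-N\cdot\big(\text{diagonal term }\sup_x g\text{ or }\epsilon_v\text{ contribution}\big).$$
This follows because $g_{\Delta,a,v}$ is a positive semidefinite kernel up to constants, from its expansion in eigenfunctions of the graph Laplacian; the discrete measure $\frac1N\sum\delta_{x_i}$ then has energy bounded below. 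Summing over $v$ and using Theorem \ref{wilms}, which bounds $\varphi(C)$ and the related graph invariants by $\bar\omega_{C/K,a}^2$ with explicit $g$-dependent constants, gives
$$\sum_{i\ne j}\ol\Delta\cdot(x_i,x_j)\ge c(g)N^2\bar\omega^2-c'(g)N\bar\omega^2 .$$
Comparing this with the upper bound yields $N\le c'(g)/(c(g)-(4g^2-1)\epsilon\cdot\text{const})$, which has the stated shape. The hypothesis $\epsilon<1/(4g^2-1)$ is exactly what makes the leading coefficient positive.

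\emph{Main obstacle.} The hard step is the local lower bound for the diagonal contributions. Each term $g_{\Delta,a,v}(x,x)$ is undefined, so it must be replaced by the regularized value $-\bar\omega_a\cdot x$ from the adjunction formula (Theorem \ref{adjunction}), and controlled by the graph invariants (the $\epsilon$-, $\tau$- and $\varphi$-invariants of Zhang). The explicit polynomial $16g^4+36g^2-26g-2$ should come from bounding these graph invariants uniformly in terms of $\varphi_v$. The first thing I would try is Cinkir's and Wilms' explicit inequalities between the invariants on metrized graphs. Non-isotriviality guarantees $\bar\omega^2>0$, so dividing by it is legitimate.
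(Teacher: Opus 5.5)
\textbf{There is a genuine gap: the positive $N^2$ term is placed in the wrong inequality.} Your overall framework matches the approach the paper describes for this theorem: evaluate $\sum_{i\neq j}\ol\Delta\cdot(x_i,x_j)$ once globally (Hodge index plus adjunction) and once locally (admissible Green functions). But the local lower bound you rely on is false.

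\emph{The local bound has no positive quadratic term.} The average of $\sum_{i\neq j}g_v(x_i,x_j)$ against $\mu_a^{\otimes N}$, where $\mu_a=c_1(\bar\omega_a)/(2g-2)$, is $N(N-1)\iint g_v\,d\mu_a\,d\mu_a=0$. This follows from the normalization $\int g_{\Delta,a}(x,\cdot)\,c_1(\bar\omega_a)=0$. Hence the infimum over configurations is $\le 0$ for every $N$, and no bound of the form $cN^2\varphi_v-c'N$ with $c>0$ can hold. Positive semidefiniteness of the kernel only says that the energy of $\frac1N\sum_i\delta_{r(x_i)}$ on the reduction graph, diagonal included, is $\ge 0$. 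Here $r$ is the retraction and $g_\mu$ the graph Green function, with $g_v(x,y)\ge g_\mu(r(x),r(y))$. This gives only $\sum_{i\neq j}g_v(x_i,x_j)\geq -N\sup_p g_\mu(p,p)$, which is linear in $N$.

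\emph{Theorem \ref{wilms} points the other way.} It bounds $\varphi(C)$ \emph{above} by $\frac{2g+1}{\max\{g-1,2\}}\bar\omega_{C/K,a}^2$, and no reverse inequality is available. So even a term $cN^2\varphi(C)$ could not become $c(g)N^2\bar\omega_{C/K,a}^2$. The theorem's only role is to turn the negative linear error $-c'(g)N\sum_v\varphi_v$ into $-c''(g)N\bar\omega_{C/K,a}^2$. Your final comparison $N\le c'(g)/(c(g)-\dots)$ therefore rests on a coefficient $c(g)$ that does not exist.

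\emph{The positivity comes from the height side,} whose leading sign your sketch leaves unspecified. Your expansion does work. For $n$ points, it gives exactly the paper's quadratic identity in ordered-pair form:
\[
\sum_{i\neq j}\ol\Delta\cdot(x_i,x_j)=-\frac{2(g-1)(n-1)}{gn}\Big|\sum_{i}(x_i-\alpha)\Big|^2+\frac{2(n+g-1)}{gn}\sum_{i<j}|x_i-x_j|^2-\frac{(g-1)n(n-1)}{g}\,\bar\alpha^2 .
\]
So the $\beta$-dependence sits in a nonpositive term. With $t=\max_i|x_i-\beta|^2$ and $\sum_{i<j}|x_i-x_j|^2\le n\sum_i|x_i-\beta|^2\le n^2t$, the left side is at most $\frac{2n(n+g-1)}{g}t-\frac{(g-1)n(n-1)}{g}\bar\alpha^2$, where $\bar\alpha^2=\bar\omega_{C/K,a}^2/(2g-2)^2$. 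It is this negative $n^2\bar\alpha^2$ term that beats the $O(n^2t)$ term, for $t$ a small multiple of $\bar\omega_{C/K,a}^2$. It also beats the linear local error $-c''(g)\,n\,\bar\omega_{C/K,a}^2$, and that is what bounds $n$.

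\emph{Secondary points.} Your sketch does not produce the stated threshold or the polynomial $16g^4+36g^2-26g-2$. These need an explicit metrized-graph bound of the linear local term in terms of $\varphi_v$ (the work of Looper--Silverman--Wilms, together with Cinkir's bound), which you leave as a placeholder. Also, the diagonal is not regularized through the global adjunction formula. It is handled locally on the reduction graph, where $g_\mu(p,p)$ is finite; adjunction enters only in deriving the identity above.
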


The proof of \cite{LSW} estimates heights by Zhang's admissible adelic line bundles on the fixed curve $C$ (instead of moduli spaces), and eventually reduces the problem to some inequalities between analytic invariants of $C$ at all places of $K$. Due to the difficulty to prove these inequalities at archimedean places (or equivalently on compact Riemann surfaces), the result of \cite{LSW} does not cover the case of number fields. 
Yu--Yuan--Zhou \cite{YYZ} has managed to prove all these desired inequalities at archimedean places and carry the proof to number fields. 
This gives the following arithmetic version of Theorem \ref{quant bog function field} and 
 quantitative version of Theorem \ref{uniform bog}. 

\begin{theorem}[Yu--Yuan--Zhou \cite{YYZ}]
\label{quant bog}
Let  $C$  be a curve of genus $g>1$ over a number field $K$, and $\beta$ be a divisor of degree 1 on $C_{\ol K}$. Then
$$\#\left\{x\in C(\ol K): |x-\beta|\leq \frac{1}{\sqrt{16g}} \sqrt{\bar\omega_{C/K,a}^2}\right\}
\leq 6.5\cdot 10^{11}\cdot g^{\frac{17}{3}}$$
and   
$$\#\left\{x\in C(\overline K):|\beta-\alpha|\le|x-\alpha|\le2 |\beta-\alpha|,\, \angle(x-\alpha,\beta-\alpha)\leq \arccos \sqrt{\frac{1.13}{g}}\right\}
    \leq 
    3.2\cdot 10^{11}g^{\frac{17}{3}}.$$
\end{theorem}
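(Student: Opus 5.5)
The plan is to follow the strategy of Looper--Silverman--Wilms and work on the fixed curve $C$ and its square $X=C^2$, using only Zhang's admissible adelic line bundles. After a base change we may assume that $\beta$, $\alpha$ and all points under consideration are rational over $K$. Suppose the first set contains $N$ points $x_1,\dots,x_N$. For each pair $i\neq j$ we compute the height $\ol\CO(\Delta)_a\cdot(x_i,x_j)$ in two ways.

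\emph{First computation (upper bound).} By Theorem \ref{canonical height}, applied with $\beta$ in place of the base point after shifting, $\ol\CO(\Delta)_a\cdot(x_i,x_j)$ equals $\frac1g(|x_i-\alpha|^2+|x_j-\alpha|^2)-2\pair{x_i-\alpha,x_j-\alpha}+(\frac1g-1)\bar\alpha^2$. Rewriting $x-\alpha=(x-\beta)+(\beta-\alpha)$, the terms involving $\beta-\alpha$ appear with total coefficient $\frac2g-2<0$. Since all $|x_i-\beta|$ are small, averaging over the pairs gives
$$
\sum_{i\neq j}\ol\CO(\Delta)_a\cdot(x_i,x_j)\le N^2\cdot\Big(\tfrac{c}{g}\,\bar\omega^2_{C/K,a}\Big)-N(N-1)\Big(1-\tfrac1g\Big)\bar\alpha^2+(\text{diagonal terms}).
$$
Here the cross term $\sum_{i\ne j}\pair{x_i-\beta,x_j-\beta}$ is controlled because $\sum_{i,j}\pair{x_i-\beta,x_j-\beta}=|\sum_i(x_i-\beta)|^2\ge0$.

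\emph{Second computation (lower bound).} Write $\ol\CO(\Delta)_a\cdot(x_i,x_j)=\sum_v\epsilon_v g_{\Delta,a,v}(x_i,x_j)$ and bound the sum over pairs place by place. At each place we use the energy-type inequality
$$
\sum_{i\neq j}g_{\Delta,a,v}(x_i,x_j)\ge -N\,c_v-N^2\cdot(\text{small}),
$$
which holds because the admissible Green function has zero mean against $\mu_{a,v}$ and is conditionally positive (a logarithmic energy estimate). At non-archimedean places, $c_v$ is an explicit multiple of the $\varphi_v$-invariant, or of the total length of the reduction graph, as in \cite{LSW}. At archimedean places we need the analogous bound by $\varphi_v(C)$ plus an $O(\log g)$-type constant. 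Summing over places and using Theorem \ref{wilms}, that is $\varphi(C)\le\frac{2g+1}{\max\{g-1,2\}}\bar\omega^2_{C/K,a}$, turns the error term $\sum_v\epsilon_vc_v$ into a multiple of $\bar\omega^2_{C/K,a}$ plus $[K:\QQ]$ times an explicit constant.

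\emph{Comparison.} Comparing the two computations gives roughly $N^2\cdot(\text{positive}\cdot\bar\omega^2)\le N\cdot(\text{const}\cdot g^{a}\bar\omega^2)$. The explicit constant $[K:\QQ]c$ must also be absorbed into $\bar\omega^2$, which requires a lower bound for $(\bar\omega^2_{C/K,a})_\QQ$. This should follow from Theorem \ref{wilms} together with an archimedean lower bound for $\varphi_v$. The resulting inequality yields $N\le 6.5\cdot10^{11}g^{17/3}$. The threshold $\frac1{\sqrt{16g}}$ is chosen so that the positive term survives.

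\emph{Second bound.} This is analogous, but the points are now medium-sized and clustered in a cone around $\beta-\alpha$. We again sum $\ol\CO(\Delta)_a\cdot(x_i,x_j)$ over pairs. The angle condition gives $\pair{x_i-\alpha,x_j-\alpha}\ge(\cos\text{of the doubled angle})\,|x_i-\alpha||x_j-\alpha|$ and the norms are within a factor $2$, so the quadratic form $\frac1g(|x|^2+|y|^2)-2\pair{x,y}$ becomes negative of size $\gtrsim\frac{0.1}{g}|\beta-\alpha|^2$ per pair. Against the same local lower bounds, this forces $N$ to be bounded polynomially in $g$.

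The main obstacle is the archimedean energy inequality. One must bound $-\sum_{i\ne j}g_{\Delta,a}(x_i,x_j)$ on a compact Riemann surface by $N\cdot(\text{const}\cdot\varphi_v+\text{explicit }g\text{-power})$, uniformly in the Riemann surface. My first attempt would be to regularize the Green function with the heat kernel at a small time $t$, writing $g=g_t+(g-g_t)$. Then $\sum_{i,j}g_t(x_i,x_j)\ge0$ by positivity of the spectral expansion. The diagonal term $g_t(x,x)$ and the difference $g-g_t$ would be bounded by $\varphi_v$, using Zhang's expression of $\varphi_v$ via eigenvalues, together with explicit heat-kernel bounds in terms of $g$, where Arakelov-metric comparison estimates are needed.
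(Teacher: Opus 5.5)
Your global framework matches the paper's. Summing $\ol\CO(\Delta)_a\cdot(x_i,x_j)$ over pairs is the Looper--Silverman--Wilms quadratic identity in disguise, and the reduction to local inequalities $\mathrm{FE}_v(n,C)+c(n,g)\varphi_v(C)\ge 0$ plus Theorem~\ref{wilms} is how the paper proceeds. Concretely, with $y_i=x_i-\alpha$,
\begin{align*}
\sum_{i\neq j}\ol\CO(\Delta)_a\cdot(x_i,x_j)
&=\frac{2(N+g-1)}{g}\sum_i|y_i|^2-2\Big|\sum_i y_i\Big|^2-\frac{(g-1)N(N-1)}{g}\,\bar\alpha^2\\
&=\frac{2(N+g-1)}{gN}\sum_{i<j}|x_i-x_j|^2-\frac{2(g-1)(N-1)}{gN}\Big|\sum_i y_i\Big|^2-\frac{(g-1)N(N-1)}{g}\,\bar\alpha^2 .
\end{align*}
Two of the steps you build on this are wrong.

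\emph{First bound: a bookkeeping slip.} After writing $y_i=(x_i-\beta)+(\beta-\alpha)$, the $\beta-\alpha$ contributions are not just $(\frac2g-2)|\beta-\alpha|^2$ per pair. There are also linear terms $(\frac2g-2)\pair{(x_i-\beta)+(x_j-\beta),\beta-\alpha}$. These have no sign, and they cancel only against the term $-2|\sum_i(x_i-\beta)|^2$ that you discard. The second line shows the correct route: drop the middle term and use $\sum_{i<j}|x_i-x_j|^2\le N\sum_i|x_i-\beta|^2$. Then your displayed bound does hold.

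\emph{Second bound: the pairwise argument fails outright.} The hypotheses only give $\pair{y_i,y_j}\ge(\frac{2.26}{g}-1)|y_i||y_j|$. For $|y_i|=|y_j|$ this allows
\[
\tfrac1g\big(|y_i|^2+|y_j|^2\big)-2\pair{y_i,y_j}=\big(2-\tfrac{2.52}{g}\big)|y_i|^2>0,
\]
so no single pair is forced to contribute negatively. What works is the global vector, as in the paper's use of $\hat h(x_1+\cdots+x_n-n\alpha)_K$. Projecting onto $\beta-\alpha$ gives $|\sum_i y_i|^2\ge\frac{1.13}{g}(\sum_i|y_i|)^2$. The condition $|y_i|\in[|\beta-\alpha|,2|\beta-\alpha|]$ gives $(\sum_i|y_i|)^2\ge\frac89 N\sum_i|y_i|^2$ by Kantorovich. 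Since $1.13\cdot\frac89>1$ (note $1.13>\frac98$ is exactly what is needed), the first two terms of the first line are $\le 0$ once $N\ge 225(g-1)$. This leaves $-\frac{(g-1)N(N-1)}{g}\bar\alpha^2$ to play against the local lower bound.

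The more serious gap is the archimedean estimate, which carries the real content of the theorem. You leave it as a plan, and the mechanism you sketch does not close it.
\begin{itemize}
\item The error $g_{\Delta,a}-g_t\ge -O(t)$ is trivial by positivity of the heat kernel, so that part is fine.
\item The diagonal term is the problem. Up to normalization, $g_t(x,x)=\sum_\ell\lambda_\ell^{-1}e^{-\lambda_\ell t}\phi_\ell(x)^2$. Zhang's formula $\varphi(X)=\sum_\ell\frac{2}{\lambda_\ell}\sum_{m,n}\big|\int_X\phi_\ell\,\omega_m\wedge\bar\omega_n\big|^2$ weights each eigenfunction by its overlap with the forms $\omega_m\wedge\bar\omega_n$, not by its pointwise size. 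So there is no termwise comparison between the two.
\item Explicit small-time heat kernel bounds for the Arakelov metric are not uniform over the moduli space, since that metric degenerates near the boundary.
\item Absorbing $[K:\QQ]\cdot(\text{const})$ also requires an \emph{explicit} uniform lower bound for archimedean $\varphi_v$. Neither Zhang's positivity nor the compactness argument of \cite{Yua21} provides one.
\end{itemize}
The missing idea is to route both invariants through the hyperbolic systole. The paper proves
\[
\mathrm{FE}(n,X)\ge-\tfrac14 n\log n-400\pi^3(g-1)^3n\max\{1,\mathrm{sys}(X)^{-1}\},\qquad
\varphi(X)\ge\max\Big\{\tfrac{1}{15625}g^{-\frac13},\ \tfrac{1}{343750}g^{-\frac53}\mathrm{sys}(X)^{-1}\Big\}.
\]
Combining these gives $\mathrm{FE}(n,X)+(7\cdot10^4g^{1/3}n\log n+5\cdot10^9g^{14/3}n)\varphi(X)\ge 0$. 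Only with explicit local inputs of this kind do constants like $6.5\cdot10^{11}g^{17/3}$ and $3.2\cdot10^{11}g^{17/3}$ come out; your final step asserting $N\le 6.5\cdot10^{11}g^{17/3}$ is unsupported without them.
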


The first inequality is an analogue of Theorem \ref{quant bog function field}, and its has a much larger coefficient $6.5\cdot 10^{11}$ due to the larger coefficients from archimedean places. 
The second inequality is a vast refinement of the extra term of  Theorem \ref{uniform bog} from \cite{Yua21}. 
In fact, the original extra term gives a bound on the number of $x\in C(\ol K)$ such that $|x-\beta|$ is smaller than some multiple of $|\beta-\alpha|$, and the current result refines it by adding extra conditions on angles.

Note that Theorem \ref{uniform bog} uses the modified Faltings height $\max\{h_\Fal(C),1\}$, while Theorem \ref{quant bog} uses the normalized admissible volume $(\bar\omega_{C/K,a}^2)_\QQ$. 
These two arithmetic invariants of $C$ are actually equivalent for many purposes. In fact, we have the following theorem from \cite[Thm. 1.4]{Yua21}. 

\begin{theorem}[Yuan \cite{Yua21}] \label{fiberwise11}
Let $g>1$ be an integer. 
Then there are positive constants $c_5(g)$ and $c_6(g)$ depending only on $g$ such that for any curve $C$ of genus $g$ over a number field $K$, we have
$$
c_5(g) \cdot \max\{h_\Fal(C),1\} \leq  (\bar\omega_{C/K,a}^2)_\QQ \leq  c_6(g) \cdot \max\{h_\Fal(C),1\}.
$$ 
\end{theorem}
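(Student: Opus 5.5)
We would prove the two inequalities separately, following the strategy of \cite{Yua21}: pass to the universal family over the moduli space, where the admissible canonical bundle and the Faltings metric become adelic line bundles on a quasi-projective base, and compare them there. Throughout, write $M_g$ for a fine moduli scheme of curves of genus $g$ with suitable level structure over $\ZZ[1/N]$. Let $\pi:X\to S$ be the universal curve over $S=M_{g,K'}$ for a number field $K'$. Let $\bar\omega_{X/S,a}$ be the admissible canonical bundle on $X$ as an adelic line bundle over the quasi-projective $S$ in the sense of \cite{YZ}. Consider the Deligne pairing $\pi_*(\bar\omega_{X/S,a}^2)$ and the Hodge bundle $\bar\lambda=\det\pi_*\omega_{X/S}$ with its Faltings metric. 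For a point $s\in S(\ol K)$ with fiber $C=X_s$, the height of $s$ with respect to $\pi_*(\bar\omega_{X/S,a}^2)$ is $(\bar\omega_{C/K,a}^2)_\QQ$, and the height with respect to $\bar\lambda$ is $h_\Fal(C)$, up to the normalization in the stable reduction. The theorem thus becomes a comparison of two height functions on $S(\ol K)$, uniformly in $s$.

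For the upper bound, we would use Noether's formula in the admissible form,
$$12\,\deg\widehat{\det\pi_*\omega}=\bar\omega_a^2+\sum_v\epsilon_v\big(\delta_v+\text{correction}\big)-4g\log 2\pi\,[K:\QQ],$$
which relates $\bar\omega_{C/K,a}^2$ to $12h_\Fal(C)$ minus a sum of local invariants. These are the Faltings delta at archimedean places and the reduction-graph invariants $\delta_v,\epsilon_v$ at finite places. At finite places these local terms are nonnegative, and Zhang's and Moriwaki's results bound $\epsilon_v$ by $c(g)\delta_v$. The $\delta_v$ themselves are controlled by $h_\Fal$ via the boundary divisor height on $\ol M_g$. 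At archimedean places, Wilms' lower bound $\delta>-c(g)$ together with a bound on the archimedean $\varphi$ and $\lambda$ invariants handles the remaining terms. The cleaner route is adelic: show that $c\,\bar\lambda-\pi_*(\bar\omega_{X/S,a}^2)$ is effective, up to a bounded constant, as an adelic line bundle on $S$ in the sense of \cite{YZ}. Effectivity gives the height inequality, and we conclude $(\bar\omega^2)_\QQ\le c_6(g)\max\{h_\Fal,1\}$.

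For the lower bound, which we expect to be the main obstacle, we would use bigness. By the result recalled in \S4.1 (step two of \cite{Yua21}), $\bar\omega_{X/S,a}$ is big on $X$ because the universal family has maximal variation. Hence $\pi_*(\bar\omega^3_{X/S,a})$ and, via Theorem \ref{wilms} in families, $\pi_*(\bar\omega^2_{X/S,a})$ dominate a positive multiple of $\bar\lambda$ minus a boundary-vertical effective adelic divisor. Concretely, we aim to show that $\pi_*(\bar\omega^2_{X/S,a})-\epsilon\bar\lambda$ is big for some $\epsilon=\epsilon(g)>0$ on the compactified moduli. We would try to get this from the family version of Theorem \ref{wilms}: $\bar\omega^2\ge c\,\varphi$, with $\varphi$ as an adelic divisor on $S$ whose archimedean part grows like the boundary Green function. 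We would then combine this with the fact that $\bar\lambda$ is the pullback of an ample bundle on the Satake compactification. Bigness then yields the height inequality $h_{\pi_*\bar\omega^2}\ge \epsilon\, h_{\bar\lambda}-C$ outside a proper Zariski closed subset. Noetherian induction on the moduli space, or directly applying the same argument to each boundary stratum and to the closed exceptional locus (which is again a family with the corresponding properties), removes the exceptional set. Finally, absorbing the constant into $\max\{h_\Fal,1\}$ via $\bar\omega^2>0$ from Theorem \ref{wilms} gives $c_5(g)$.
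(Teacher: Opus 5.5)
Your upper bound is correct, and it needs less than you list. After base change to semistable reduction, Noether's formula together with Zhang's comparison of the Arakelov and admissible self-intersections writes $12h_\Fal(C)+4g\log 2\pi-(\bar\omega^2_{C/K,a})_\QQ$ as a normalized sum of two kinds of terms: the nonnegative reduction-graph terms $\delta(\Gamma_v)+\epsilon(\Gamma_v)$ at finite places, and Faltings' $\delta(C_\sigma)$ at archimedean places. Wilms' uniform lower bound for $\delta$ alone then gives $(\bar\omega^2_{C/K,a})_\QQ\le 12h_\Fal(C)+c(g)$. The bounds $\epsilon\le c(g)\delta$ and the control of $\delta_v$ that you mention play no role there; they belong to the lower bound.

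The lower bound is where the proposal has a genuine gap: nothing in it produces a positive multiple of $\lambda$. You assert, without any mechanism, the step from bigness of $\bar\omega_{X/S,a}$ on $X$ to a domination of $\pi_*(\bar\omega^2)$ over $\bar\lambda$; note also that $\pi_*(\bar\omega^3)$ is a codimension-two class on $S$, not a line bundle. Your concrete plan, combining $\bar\omega^2\ge c\varphi$ with ampleness of $\lambda$ on the Satake compactification, cannot work, because $\varphi$ is an adelic divisor on $S$ with trivial underlying divisor. For $g\ge 3$, take a complete curve $B\subset M_g$. On $B(\ol\QQ)$ the archimedean $\varphi$ is bounded by continuity, and the finite part lives at finitely many places with bounded contribution. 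So $\varphi(X_s)$ stays bounded on $B$ while $h_\Fal(X_s)$ does not. Such curves pass through every point of $M_g$, so no inequality $\varphi\ge\epsilon h_\Fal-C$ holds off a proper Zariski closed subset. Lower bounds of $\varphi$-type therefore can never make $\pi_*(\bar\omega^2)-\epsilon\bar\lambda$ big.

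The term $12\lambda$ has to come from Noether's formula read in the other direction. Your ``multiple of $\bar\lambda$ minus a boundary divisor'' is precisely $12\bar\lambda-\bar\delta-\bar\epsilon$, but its archimedean part is Faltings' $\delta$, which is a function on all of $M_g(\CC)$, not a boundary term. To absorb it you need two local bounds:
\begin{itemize}
\item at finite places, $\delta(\Gamma_v)+\epsilon(\Gamma_v)\le C(g)\varphi_v$, from Cinkir's $\varphi\ge c(g)\delta$ together with $\epsilon\le c(g)\delta$;
\item at archimedean places, $\delta(C_\sigma)\le C(g)\varphi(C_\sigma)+C'(g)$ uniformly on $M_g(\CC)$.
\end{itemize}
The second bound is the crucial one. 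The boundary growth of the archimedean $\varphi$ enters only here, and it must be matched by an upper bound of the same shape for Faltings' $\delta$, which you do not address. Given both bounds, Theorem~\ref{wilms} yields $12h_\Fal(C)\le\big(1+C(g)\frac{2g+1}{\max\{g-1,2\}}\big)(\bar\omega^2_{C/K,a})_\QQ+C''(g)$.

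Alternatively, if you take the bigness theorem of \cite{Yua21} as a black box, use it on $X$, not on $S$:
\begin{itemize}
\item Let $\bar H$ be $\bar\lambda$ twisted by a constant so that $h_{\bar H}\ge\max\{h_\Fal,1\}$, using a uniform lower bound for $h_\Fal$. Since $\bar\omega_{X/S,a}$ is nef and big, it stays big after subtracting $\epsilon\pi^*\bar H$.
\item A small section then gives $h_{\bar\omega_a}(x)\ge\epsilon h_{\bar H}(\pi(x))$ for $x$ off a proper closed subset $Z\subset X$. This bounds from below the essential minimum of every fibre $X_s\not\subset Z$.
\item Zhang's theorem of successive minima, with $e_2\ge 0$ by nefness, yields $(\bar\omega^2_{X_s,a})_\QQ\ge(2g-2)\epsilon\, h_{\bar H}(s)$.
\item Your Noetherian induction then closes the argument, because subvarieties of the moduli scheme still carry families of maximal variation.
\end{itemize}

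Finally, in your own last step, $\bar\omega^2>0$ is not enough. For $h_\Fal\le 1$ you need $(\bar\omega^2_{C/K,a})_\QQ\ge c(g)>0$ uniformly over all curves and all number fields. This requires Theorem~\ref{wilms} together with a uniform positive lower bound for the archimedean $\varphi$ on the non-compact $M_g(\CC)$, as in \cite[Thm.~3.10]{Yua21}.
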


The result is also valid over function fields, and we refer to \cite[Thm. 1.4]{Yua21} for more details.

\subsection{Proof of the quantitative Bogomolov conjecture}

Let us describe the idea to prove Theorem \ref{quant bog}. 
As mentioned above, the global part of the argument is duplicated from the proof of 
Theorem \ref{quant bog function field} of \cite{LSW}, which is based on a quadratic identity. 

As in the theorem, let $\beta$ be a divisor of degree 1 on $C_\OK$. 
Assume that $x_1,\dots, x_n$ are $n$ distinct points of $C(\OK)$ with small heights 
$\hat h(x_i-\beta)$. We hope to give an upper bound of $n$. By base change, we can assume that $x_1,\dots, x_n$ and $\beta$ are defined over $K$. 
Extend $x_i$ to the admissible adelic divisor $\bar x_i$ on $C$. 
Take $\bar\alpha=(2g-2)^{-1}\bar\omega_{C/K,a}$ as before. 
By the arithmetic Hodge index theorem (cf. Theorem \ref{hodge index}) and the arithmetic adjunction formula (cf. Theorem \ref{adjunction}), we have
$$
\hat h(x_i-x_j)_K
=-\frac12 (\bar  x_i-\bar x_j)^2
=-\frac12(\bar x_i^2-2\bar x_i\cdot \bar x_j+\bar x_j^2)
=(g-1) \bar\alpha\cdot x_i+ (g-1) \bar\alpha\cdot x_j+\bar x_i\cdot \bar x_j,
$$
and thus
$$
\sum_{1\leq i<j\leq n} \hat h(x_i-x_j)_K
=(g-1)(n-1)\sum_{i}  \bar\alpha\cdot x_i +\sum_{i<j} \bar x_i\cdot \bar x_j.
$$
Similarly, we have
$$
\hat h(x_1+\cdots +x_n-n\alpha)_K
=-\frac12 (\bar x_1+\cdots +\bar x_n-n\bar\alpha)^2
=(n+g-1)\sum_{i}  \bar\alpha\cdot x_i- \sum_{i<j} \bar x_i\cdot \bar x_j -\frac{1}{2} n^2 \bar\alpha^2.
$$
Canceling the term $\ds \sum_{i}  \bar\alpha\cdot x_i$ in the last two expressions, we get the quadratic identity
$$
\sum_{1\leq i<j\leq n} \hat h(x_i-x_j)_K
=\frac{(g-1)(n-1)}{n+g-1}\hat h(x_1+\cdots +x_n-n\alpha)_K
+\frac{gn}{n+g-1}\sum_{i<j} \bar x_i\cdot \bar x_j 
+ \frac{(g-1)(n-1)n^2}{2(n+g-1)} \bar\alpha^2.
$$

Assume that $|x_i-\beta|^2=\hat h(x_i-\beta)_K<t$ for a small positive number $t$, and we want to have an upper bound of $n$ in terms of $t$. 
We first have
$$
\hat h(x_i-x_j)_K = |x_i-x_j|^2 \leq (|x_i-\beta|+|x_j-\beta|)^2 
\leq (\sqrt t+\sqrt t)^2=4t. 
$$

To illustrate the idea, fix $C$ and $g$, and let $n$ increase for the moment. We will prove a key bound of the form
$$
\sum_{i<j} \bar x_i\cdot \bar x_j \geq -O((n\log n)\bar\alpha^2).
$$
It follows that the inequality becomes 
$$
2n^2 t
\geq  (g-1)\hat h(x_1+\cdots +x_n-n\alpha)_K+\frac{g-1}{2} n^2\bar\alpha^2-O((n\log n)\bar\alpha^2).
$$
To get the first inequality of Theorem \ref{quant bog}, note the positivity $\hat h(x_1+\cdots +x_n-n\alpha)_K\geq0$, so the inequality gives an upper bound of $n$ 
as long as $\ds 2t<\frac{g-1}{2} \bar\alpha^2$. 
For the second inequality of Theorem \ref{quant bog}, 
note that for small $t$, 
$$
|x_1+\cdots +x_n-n\alpha| \geq 
n|\alpha-\beta|-|x_1-\beta|-\cdots -|x_n-\beta|
\geq n (|\alpha-\beta|-\sqrt t).
$$
Then for $t< |\alpha-\beta|^2$, we have 
$$
(g-1)\hat h(x_1+\cdots +x_n-n\alpha)_K \geq (g-1) n^2 (|\alpha-\beta|-\sqrt t)^2.
$$
Assume that 
$$
(g-1) n^2 (|\alpha-\beta|-\sqrt t)^2 \geq 2n^2 t
$$
which is satisfied by a relation of the form $t<c(g) |\alpha-\beta|^2$. Then the inequality implies 
$$
0 \geq  \frac{g-1}{2} n^2\bar\alpha^2-O((n\log n)\bar\alpha^2).
$$
This gives an upper bound of $n$ as in the second inequality of Theorem \ref{quant bog}.

It remains to prove an explicit bound of type
$$
\sum_{i<j} \bar x_i\cdot \bar x_j \geq -O((n\log n)\bar\alpha^2).
$$
Note that 
$$
\sum_{i<j} \bar x_i\cdot \bar x_j =\sum_{v\in M_K}\sum_{i<j} \epsilon_{v} g_{v}(x_i, x_j),
$$
where $g_v:(C^2\setminus \Delta)(\ol K_v)\to \RR$ is the admissible Green function of $\Delta$ at $v$.
By Theorem \ref{wilms}, we have
$$
\bar\omega_{C/K,a}^2 \geq \frac{\max\{g-1,2\}}{2g+1} \sum_{v\in M_K} \varphi_v(C). 
$$
The problem is reduced to find a suitable positive constant $c(n,g)$ independent of $C$ and $v$ such that 
$$
\mathrm{FE}_{v}(n,C)+ c(n,g)\, \varphi_v(C) \geq 0.
$$
Here the Faltings--Elkies invariant
$$
\mathrm{FE}_{v}(n,C)=\inf_{x_1,\dots, x_n\in C(K_v)} \sum_{1\leq i<j\leq n} g_{v}(x_i, x_j).
$$
Note that the problem is purely local for $v$. 

If $v$ is a non-archimedean place at which $C$ has potential good reduction, then we simply have $g_{v}(x_i, x_j)\geq0$ and $\varphi_v(C)=0$. 
If $v$ is an arbitrary non-archimedean place, both invariants can be computed in terms of the reduction dual graph. 
In this case, a lower bound of $\varphi_v(C)$ is given by Cinkir \cite{Cin11}, and a suitable lower bound of $\mathrm{FE}_{v}(n,C)$ is given by \cite{LSW}. 
Their combination gives $c(n,g)$ at $v$. 

If $v$ is archimedean, the term $\mathrm{FE}_{v}(n,C)$ is an invariant of the compact Riemann surface $C_v(\CC)$. Lower bounds of $\mathrm{FE}_{v}(n,C)$ was previously obtained by Faltings \cite{Fal84} and Elkies (cf. \cite[\S VI.5]{Lan88}), but their bounds are not uniform as the Riemann surface varies. 

In the following, let $X$ be a connected compact Riemann surface of genus $g>1$. 
Then the invariants $\mathrm{FE}(n,X)$ and $\varphi(X)$ are defined for $X$.
In \cite{YYZ}, these two invariants are bounded below in terms of the \emph{systole} $\mathrm{sys}(X)$ of $X$, i.e. the length of the shortest closed geodesic of $X$ under the hyperbolic metric of constant scalar curvature $-1$.
Note that $\mathrm{sys}(X)>0$ is a canonical invariant of $X$, which goes to 0 as $X$ degenerates to the boundary of the moduli space of Riemann surfaces of genus $g$. 
In \cite{YYZ}, we have proved 
$$
\mathrm{FE}(n,X) \geq -\frac{1}{4} n\log n - 400 \pi^3(g-1)^3  n\cdot 
\max\left\{1, \mathrm{sys}(X)^{-1}\right\}
$$
and 
$$
\varphi(X)\geq \max\left\{ \frac{1}{15625} g^{-\frac13},  \frac{1}{343750} g^{-\frac53} \mathrm{sys}(X)^{-1}\right\}.
$$
Recall that \cite[Thm. 3.10]{Yua21} proves that $\varphi(X)$ goes to infinity as $X$ degenerates to the boundary of the moduli space, and confirms that $\varphi(X)>c_7(g)$ for some positive constant $c_7(g)$ depending only on $g$ by a compactness argument. 
The second inequality gives an explicit value $\ds c_7(g)=\frac{1}{15625} g^{-\frac13}$, and measures the growth of $\varphi(X)$ to infinity in terms of $\mathrm{sys}(X)^{-1}$. 
A combination of these two inequalities gives an inequality 
$$
\mathrm{FE}(n,X) + (7\cdot 10^4 g^{\frac13} n\log n + 5\cdot 10^9 g^{\frac{14}{3}} n) \varphi(X) \geq 0.
$$
The constant are further improved with some more delicate estimates.
This explains the proof of Theorem \ref{quant bog}.

\subsection{Medium points and small points}
Let us return to the proof of Theorem \ref{quant Mordell}. 
In \S\ref{sec large}, with the quantitative Vojta inequality, we have already obtained an explicit upper bound of  
the order of
$$C(K)_{\rm large}
=\left\{x\in C(K):|x|\geq 1.2\cdot 10^{9}\cdot g^{\frac{7}{3}} \sqrt{\bar\omega_{C/K,a}^2}\right\}.$$ 
It remains to estimate the order of the complement
$C(K)\setminus C(K)_{\rm large}.$

A rough bound of this set is obtained as follows. 
Recall that $V=J(K)_\RR$ is endowed with the norm $|x|=\sqrt{\hat h(x)_K}$. 
Denote by $B(x,R)$ the closed ball in $V$ of center $x\in V$ and radius $R$. 
For simplicity, define
$$
R_1=\frac{1}{\sqrt{16g}} \sqrt{\bar\omega_{C/K,a}^2}, \quad \
R_2=1.2\cdot 10^{9}\cdot g^{\frac{7}{3}}
\sqrt{\bar\omega_{C/K,a}^2}.$$ 
For every $x\in C(K)\setminus C(K)_{\rm large}$, the ball $B(x, R_1)$ is contained in the ball $B(0, R_1+R_2)$. By Theorem \ref{quant bog}, 
 every point of $B(0, R_1+R_2)$ is covered by at most $6.5\cdot 10^{11}\cdot g^{\frac{17}{3}}$ such balls. 
By comparing volumes, we have 
$$
6.5\cdot 10^{11}\cdot g^{\frac{17}{3}}\cdot  \vol(B(0, R_1+R_2))
\geq |C(K)\setminus C(K)_{\rm large}| \cdot \vol(B(0, R_1)). 
$$
This gives 
$$
 |C(K)\setminus C(K)_{\rm large}|  \leq 6.5\cdot 10^{11}\cdot g^{\frac{17}{3}} \cdot (1+R_2/R_1)^r
 =6.5\cdot 10^{11}\cdot g^{\frac{17}{3}} \cdot \left(1+4.8\cdot 10^9 g^{\frac{17}{6}}\right)^r.
$$
This bound is too big, comparing the expectation that the Vojta constant tends to 1 as $g$ tends to infinity. 

To lower the constant, we divide the set $C(K)\setminus C(K)_{\rm large}$ into the  subsets
$$C(K)_{\rm small}
=\{x\in C(K):  |x|\leq R_1\}$$
and 
$$C(K)_{\rm medium}
=\{x\in C(K): R_1 <  |x| < R_2\}.$$
By Theorem \ref{quant bog}, 
we have a bound 
$$
|C(K)_{\rm small}| \leq  6.5\cdot 10^{11}\cdot g^{\frac{17}{3}}. 
$$
To estimate 
$C(K)_{\rm medium}$, 
we use the second inequality of Theorem \ref{quant bog}. 
We cover the set $C(K)_{\rm medium}$ by the union of the sets
$$C(K)_{2^i R_1, 2^{i+1} R_1}
=\{x\in C(K): 2^i R_1  \leq  |x| < 2^{i+1} R_1\}$$
for $i=1,\dots, \lceil\log_2(R_2/R_1)\rceil$. Then we bound the order of each $C(K)_{2^i R_1, 2^{i+1} R_1}$ separately, where we also need spherical packing to control angles. 
This eventually gives  
$$|(C\cap\Lambda)_{\mathrm{medium}}|
\leq 1.64\cdot10^{13}g^7\left(1+\frac{5}{2\sqrt2g}\right)^{n}.$$

\section*{Acknowledgments.}
The works of the author in this survey are greatly influenced by the previous works of Vojta, Zhang, de Jong, Dimitrov--Gao--Habegger, K\"uhne, Looper--Silverman--Wilms, and Yuan--Zhang. The author is also grateful to his collaborators Jiawei Yu, Shou-Wu Zhang, and Shengxuan Zhou. The author thanks Chunhui Liu and Jiawei Yu for reading an early draft of this survey.

The author would like to thank the support of the China--Russia Mathematics Center. The author is supported by grants NO. 12250004 and NO. 12321001
from the National Science Foundation of China, and by the Xplorer Prize from the New Cornerstone Science Foundation.


\end{document}